\newtheorem{theorem}{Theorem}[section]
\newtheorem{thmx}{Theorem}
\newtheorem{lemma}[theorem]{Lemma}
\newtheorem{conjecture}[theorem]{Conjecture}
\theoremstyle{definition}
\newtheorem*{remark}{Remark}
\theoremstyle{definition}
\newtheorem*{claim}{Claim}
\theoremstyle{definition}
\newtheorem{example}{Example}
\theoremstyle{definition}
\theoremstyle{definition}
\theoremstyle{proposition}
\newtheorem{proposition}{Proposition}[section]
\theoremstyle{proposition}
\theoremstyle{definition}
\newtheorem*{notation}{Notation}
\title{Efficient Cycles in Loop Space}
\author{Robin Elliott}
\date{}
\begin{document}

\maketitle

\begin{abstract}
   This paper investigates how the geometry of a cycle in the loop space of Riemannian manifold controls its topology. For fixed $\beta \in H^n(\Omega X; \mathbb{R})$ one can ask how large $|\langle \beta, Z \rangle|$ can be for cycles $Z$ supported in loops of length $\leq L$ and of volume $\leq L^{n-1}$ for a suitably defined notion of volume of in loop space. We show that an upper bound to this question provides upper bounds Gromov's distortion of higher homotopy groups. We also show that we can exhibit better lower bounds than are currently known for the corresponding questions for Gromov's distortion. Specifically, we show there exists a $\beta$ detecting the homotopy class of the puncture in $[(\mathbb{CP}^2)^{\#4} \times S^2]^\circ$ and a family of cycles $Z_L$ with the geometric bounds above such that $|\langle \beta, Z \rangle| = \Omega(L^6/\text{log}L)$.
\end{abstract}

\section{Introduction} \label{intro}

The purpose of this paper is to investigate the existence of cycles in loop space with small volume, small suplength and large homological degree. If $Z$ is a chain in the loop space $\Omega X$ of a metric space $X$, we say that $\text{suplength}(Z) \leq L$ if $Z$ is supported in the space $\Omega_{\leq L}X$ of loops in $X$ of length at most $L$. The notion of volume of a chain will be defined using the $n$-dimensional Hausdorff measure of the chain, with respect to a metric on $\Omega X$ induced by the metric on $X$. The homological degree of a cycle will be measured by evaluating it against a fixed cohomology class $\beta \in H^n(\Omega X)$. We work with real coefficients throughout the paper unless otherwise stated.

More precisely, given $\beta \in H^n(\Omega X)$ define the \textit{cohomological distortion} of $\beta$
$$ \delta_{\beta}(L) = \text{sup} \{ \langle \beta,\; Z \rangle \;|\; \text{Suplength}(Z) \leq L \text{ and } \text{Vol}(Z) \leq L^n \} $$
and investigate the asymptotic growth of $\delta_{\beta}(L)$ as $L \to \infty$. Here the volume bound of $\leq L^n$ is chosen so that this notion of distortion matches existing notions of distortion, as described in Section \ref{gromovdistortion}.

\begin{example} Consider $\beta$ a generator of $H^{n-1}(\Omega S^n) \cong \mathbb{R}$ for $n\geq 2$. This $\beta$ detects the degree of a map $f: S^n \to S^n$, in the sense that if $\hat{f}: S^{n-1} \to \Omega S^n$ is a desuspension of $f$ (under the suspension-loop adjunction) then $\langle \beta,\; \hat{f}_*[S^{n-1}] \rangle = C\text{deg}(f)$ for some nonzero constant $C$ depending on the choice of generator $\beta$. We will show that $\delta_{\beta}(L) = \Omega(L^n)$.

Let $\zeta_n$ be a fixed cycle representing the generator of $H_{n-1}(\Omega S^n)$, i.e. $\zeta_n$ is a sweepout of $S^n$ by loops. For $L \in \mathbb{N}$, let $\{L\}: \Omega X \to \Omega X$ be the map that sends a loop $\gamma$ to its $L$-fold concatenation, $\gamma \cdot \gamma \cdot \ldots \cdot \gamma$. Once the relevant metric setup has been established (see section \ref{metricsetup}), it can be shown that $L^{n-1}\{L\}_*\zeta_n$ are a family of $(n-1)$-cycles in $\Omega S^n$ witnessing that $\delta_{\beta}(L) \gtrsim L^n$.

Here is another construction of a different family giving the same bound. Let $f_L: S^n \to S^n$ be a family of $L$-Lipschitz maps that are degree $\Theta(L^n)$. Then take the family of cycles to be $(\Omega f_L)_*\zeta_n$. This gives the same asymptotic lower bound of $L^n$ for $\delta_{\beta}(L)$. In \cite{itintinqt} it is shown that this bound of $L^n$ is asymptotically sharp. 
\end{example}

\begin{example} Consider $\beta$ a generator of $H^{2n-2}(\Omega S^n) \cong \mathbb{R}$ for $n \geq 2$ even. This $\beta$ detects the Hopf invariant of a map $S^{2n-1} \to S^n$ in the same sense as above. We will show that its cohomological distortion is $\Omega(L^{2n-1})$.

Consider the same $\zeta_n$ as in the previous example. The $(2n-2)$-cycle $[\zeta_n, \zeta_n]$ generates $H_{2n-2}(\Omega X)$, where here the bracket is the graded commutator associated to the Pontryagin product on $C_*(\Omega X)$. Then the family of cycles $Z_L \coloneqq L^{2n-2}[\{L\}_*\zeta_n, \{L\}_*\zeta_n]$ witnesses that $\delta_{\beta}(L) \gtrsim L^{2n}$.

Alternatively, we could instead take a family of maps $f_L: S^{2n-1} \to S^n$ that are $L$-Lipschitz and have Hopf invariant $\Theta(L^{2n})$. The same construction $Z_L \coloneqq (\Omega f_L)_*\zeta_{2n-2}$ as in the previous example also gives the cohomological distortion of $\beta$ is $\gtrsim L^{2n}$. In \cite{itintinqt} it is shown that this bound of $L^{2n}$ is asymptotically sharp.
\end{example}

\subsection{Relation to Gromov's distortion} \label{gromovdistortion}

The notion of cohomological distortion is related to Gromov's notion of homotopical distortion, which has its origins in \cite{htpicaleffects}. We reformulate Gromov's notion, following \cite{scalspaces}. Given a rational homotopy class $\alpha \in \pi_n(X) \otimes \mathbb{Q}$, the \textit{distortion} $\delta_{\alpha}(L)$ of $\alpha$ is
$$ \delta_{\alpha}(L) = \text{sup}
\{k \;|\; \text{there exists an $L$-Lipschitz map $S^n \to X$ with $[f] = k\alpha$} \} $$

The relationship between the two notions of distortion comes from the following observation. There is a constant $C>0$ such that any $L$-Lipschitz map $f: S^n \to X$ induces an $(n-1)$-cycle $\widehat{f}_*[S^{n-1}]$ in $\Omega X$ that is suplength at most $CL$ and volume at most $CL^n$. This gives the following proposition.

\begin{proposition} \label{distortioncomparison} Let $\beta \in H^{n-1}(\Omega X, \mathbb{R})$ and $\alpha \in \pi_n(X) \otimes \mathbb{Q}$. The dual of the Hurewicz homomorphism defines a real homotopy functional $\tau^{\vee}(\beta) \in \text{Hom}(\pi_n(X); \mathbb{R})$. If $\langle \tau^{\vee}(\beta), \alpha \rangle \neq 0$ then $\delta_{\beta}(L) \gtrsim \delta_{\alpha}(L)$.
\end{proposition}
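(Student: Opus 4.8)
The plan is to feed an efficient Lipschitz representative of a multiple of $\alpha$ into the loop-space construction recalled just before the proposition, and then to check that pairing the resulting cycle against $\beta$ recovers the multiplicity we started with, up to the fixed nonzero scalar $a \coloneqq \langle \tau^{\vee}(\beta), \alpha \rangle$.

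Concretely: fix $L$ and choose a based $L$-Lipschitz map $f \colon S^n \to X$ representing a class whose image in $\pi_n(X)\otimes\mathbb{Q}$ is $k\alpha$, with $k$ within a constant factor of $\delta_{\alpha}(L)$ (maps representing elements of $\pi_n(X)$ are taken based, as usual); I will write $[f]$ both for the integral class and for its image in $\pi_n(X)\otimes\mathbb{Q}$. Let $\widehat{f} \colon S^{n-1} \to \Omega X$ be its adjoint under the loop--suspension adjunction and set $Z = \widehat{f}_{*}[S^{n-1}] \in C_{n-1}(\Omega X)$. By the observation preceding the proposition, $\mathrm{Suplength}(Z) \le CL$ and $\mathrm{Vol}(Z) \le CL^{n-1}$, where $C$ depends only on $X$ and on the fixed choice of metric on $\Omega X$; in particular $Z$ is admissible in the definition of $\delta_{\beta}(CL)$.

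Next I would unwind the identity $\langle \beta, Z \rangle = \langle \tau^{\vee}(\beta), [f] \rangle$. By definition $\tau^{\vee}(\beta)$ is $\beta$ precomposed with the map $\tau \colon \pi_n(X) \cong \pi_{n-1}(\Omega X) \to H_{n-1}(\Omega X)$, where the isomorphism is the adjunction and the second map is the Hurewicz homomorphism $h$ of $\Omega X$ (and we use $H^{n-1}(\Omega X;\mathbb{R}) \cong \mathrm{Hom}(H_{n-1}(\Omega X;\mathbb{Z}),\mathbb{R})$ to view $\beta$ as a homology functional). The adjunction carries $[f]$ to $[\widehat{f}] \in \pi_{n-1}(\Omega X)$, and naturality of the Hurewicz map gives $h([\widehat{f}]) = \widehat{f}_{*}[S^{n-1}] = Z$ in $H_{n-1}(\Omega X)$. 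Hence $\langle \beta, Z \rangle = \langle \beta, \tau([f]) \rangle = \langle \tau^{\vee}(\beta), [f] \rangle$, and since $\tau^{\vee}(\beta)$ takes values in the torsion-free group $\mathbb{R}$ it factors through $\pi_n(X)\otimes\mathbb{Q}$, so $\langle \tau^{\vee}(\beta), [f] \rangle = k a$.

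To conclude: replacing $Z$ by $-Z$ when $a<0$ (which affects neither the suplength nor the volume), we obtain an admissible cycle for $\delta_{\beta}(CL)$ with $\langle \beta, Z \rangle = |a|\,k \gtrsim \delta_{\alpha}(L)$, whence $\delta_{\beta}(CL) \gtrsim \delta_{\alpha}(L)$; reparametrising $L \mapsto L/C$, which is harmless for the relation $\gtrsim$ (understood here up to constant rescaling of the argument), gives $\delta_{\beta}(L) \gtrsim \delta_{\alpha}(L)$. Almost all of this is bookkeeping; the one step that genuinely requires care is the geometric observation feeding into it — that $\widehat{f}$ is $O(L)$-Lipschitz into $\Omega X$ for the chosen metric, so that $\widehat{f}_{*}[S^{n-1}]$ has $(n-1)$-volume $O(L^{n-1})$, and that it lands in loops of length $O(L)$ — and this is where the precise metric setup of Section~\ref{metricsetup} must be invoked; the remaining friction is purely the handling of the constant $C$, of torsion, and of integral-versus-rational homotopy classes.
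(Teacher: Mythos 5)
Your proof is correct and fills in, in the paper's intended way, what the text leaves as a one-line observation: take a near-extremal $L$-Lipschitz representative of a multiple of $\alpha$, desuspend to obtain an admissible $(n-1)$-cycle $Z$ in $\Omega X$, and identify $\langle\beta, Z\rangle$ with $k\langle\tau^\vee(\beta),\alpha\rangle$ via the adjunction and naturality of the Hurewicz map. Incidentally, your bound $\mathrm{Vol}(Z)\lesssim L^{n-1}$ is the correct sharp one for this $(n-1)$-dimensional cycle (coming from $\widehat{f}$ being $O(L)$-Lipschitz into $\Omega X$), and is precisely what admissibility in $\delta_\beta(CL)$ requires; the paper's stated $CL^n$ in the preceding paragraph reads as a slip.
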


Note that bound of $L^n$ on the $n$-volume of $Z$ in the definition of cohomological distortion was chosen so the inequality in Proposition \ref{distortioncomparison} does not have to contain any additional factors of $L$. 

In \cite{qhtypthy} Gromov conjectured that the distortion of a rational homotopy class $\alpha \in \pi_n(X) \otimes \mathbb{Q}$ was determined by the minimal model of $X$. In the case of the least distorted homotopy elements, a strong interpretation of this can be stated as follows. 

\begin{conjecture} (Gromov) The homotopical distortion of a rational homotopy class $\alpha \in \pi_n(X) \otimes \mathbb{Q}$ is $\Theta(L^n)$ if and only if the Hurewicz image of $\alpha$ is nonzero. Otherwise, the homotopical distortion is $\Omega(L^{n+1})$.

\end{conjecture}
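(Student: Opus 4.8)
\medskip

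Since this is Gromov's conjecture I do not expect to settle it; I will isolate the part that follows from standard arguments and then describe how I would attack the open part. That the Hurewicz image being nonzero forces $\delta_\alpha(L)=\Theta(L^n)$ splits into two independent estimates. The lower bound $\delta_\alpha(L)=\Omega(L^n)$ holds for every nonzero $\alpha$: fix a Lipschitz representative $g\colon S^n\to X$ of $\alpha$ and precompose with a degree-$k$ self-map of $S^n$ of Lipschitz constant $O(k^{1/n})$ (subdivide $S^n$ into $\asymp k$ round cells and collapse each onto $S^n$), so that $k\asymp L^n$ gives an $L$-Lipschitz map in the class $k\alpha$. For the matching upper bound, pick a closed form $\omega$ of bounded comass representing a class in $H^n(X;\mathbb{R})$ that pairs nontrivially with the Hurewicz image of $\alpha$; an $L$-Lipschitz $f\colon S^n\to X$ with $[f]=k\alpha$ satisfies $k\langle\omega,h(\alpha)\rangle=\int_{S^n}f^*\omega$, and $\bigl|\int_{S^n}f^*\omega\bigr|\le\operatorname{comass}(\omega)\,\operatorname{Lip}(f)^n\,\operatorname{vol}(S^n)=O(L^n)$, whence $k=O(L^n)$. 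The contrapositive half of the ``only if'' then follows from the reverse implication below, as $L^{n+1}\ne O(L^n)$.

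The substance is the reverse implication: assuming $h(\alpha)=0$, construct for each $L$ an $L$-Lipschitz map $S^n\to X$ realizing a multiple of $\alpha$ of size $\gtrsim L^{n+1}$. My plan is to organize the construction through loop space. Rationally, $h(\alpha)=0$ says that $\alpha$ is undetected by $H^*(X)$, hence that $\alpha$ is, to leading order, an iterated Whitehead product of classes that \emph{are} cohomologically detected; dually, the adjoint $\widehat\alpha\in\pi_{n-1}(\Omega X)$ is an iterated Samelson bracket (equivalently, on homology, an iterated Pontryagin commutator) of spherical classes coming from lower-dimensional sweepouts $\zeta$ --- exactly the shape of the classes $[\zeta_n,\zeta_n]$ of the introduction. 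I would inflate each bracket factor by the concatenation maps $\{L\}$, assemble the iterated bracket into a cycle $Z_L\subset\Omega X$ of controlled suplength and Hausdorff volume with $\langle\beta,Z_L\rangle$ large (so that $\delta_\beta(L)$ is large), and then ``desuspend'' $Z_L$ --- being an inflated bracket of spherical classes, it is the Hurewicz image of a spherical class --- back to a sphere map $S^n\to X$ carrying a large multiple of $\alpha$, with Lipschitz constant kept of order $L$. This last step is not supplied by Proposition~\ref{distortioncomparison}, which only transfers upper bounds on $\delta_\beta$ to upper bounds on $\delta_\alpha$, so the controlled desuspension of the explicit cycle has to be carried out directly.

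The main obstacle --- and the step I expect to be genuinely hard --- is the volume and Lipschitz bookkeeping common to the estimate for $Z_L$ and to its desuspension. Inflating $k$ bracket factors by naive $\{L\}$-concatenation multiplies loop lengths by $L$ but multiplies the relevant Hausdorff measure of the assembled cycle by strictly more than the allotted power of $L$, forcing a rescaling that costs one factor of $L$ and yields only $\Omega(L^n)$. The real work is to reparametrize the iterated bracket \emph{efficiently} --- redistributing the concatenations among the factors so that the Hausdorff volume grows only as fast as the budget permits --- and to carry that efficiency through the desuspension; doing this with no loss is, I believe, precisely what keeps the sharp exponent $n+1$ conjectural. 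A careful but sub-optimal redistribution should recover everything except a logarithmic factor, which is why in the concrete case $[(\mathbb{CP}^2)^{\#4}\times S^2]^\circ$ the bound one can actually certify is $\Omega(L^6/\log L)$ rather than $\Omega(L^6)$.
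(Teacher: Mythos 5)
There is no proof of this statement in the paper, and there cannot be: the very next sentence of the paper records that Manin and Berdnikov have \emph{disproved} this conjecture. Their example is precisely the manifold $Y=[(\mathbb{CP}^2)^{\#4}\times S^2]^\circ$ studied in the rest of the paper: the class $\alpha\in\pi_5(Y)\otimes\mathbb{R}$ of the puncture has trivial Hurewicz image yet $\delta_\alpha(L)=o(L^6)$, so the ``otherwise $\Omega(L^{n+1})$'' half is false. Your proposal correctly and cleanly isolates the two easy implications (the universal lower bound $\Omega(L^n)$ via degree-$k$ self-maps of $S^n$, and the $O(L^n)$ upper bound when $h(\alpha)\ne0$ by pulling back a bounded-comass form), and correctly flags that Proposition~\ref{distortioncomparison} only transfers bounds one way. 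But the ``reverse implication'' you set out to prove is the part that is known to be false, so a plan to establish it cannot succeed; you should have read one sentence further in the paper.

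The concrete error is in your last paragraph. The $\Omega(L^6/\log L)$ bound of Theorem~\ref{thma} is a lower bound on the \emph{cohomological} distortion $\delta_\beta$, not on the homotopical distortion $\delta_\alpha$. You suggest that the desuspension of the cycle $Z_L$ back to a sphere map ``should recover everything except a logarithmic factor,'' i.e.\ that $\delta_\alpha(L)\gtrsim L^6/\log L$. That cannot be right: it would contradict the Berdnikov--Manin upper bound $\delta_\alpha(L)=o(L^6)$ up to a log (and, with the conjectured improvement of Theorem~\ref{thmb} to linear volume, would contradict it outright). The point of the paper is exactly the opposite of your closing claim: the cycles $Z_L$ have small volume and large pairing with $\beta$, but they need \emph{not} desuspend to $O(L)$-Lipschitz sphere maps; the loss in that desuspension is what allows $\delta_\beta$ to be strictly larger than $\delta_\alpha$. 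So the ``controlled desuspension'' you identify as the hard step is not merely hard --- it is provably impossible to carry out without loss, by the Berdnikov--Manin theorem.
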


Recently in \cite{scalspaces} Manin and Berdnikov disproved  the above conjecture. Specifically, they exhibited a punctured $6$-manifold $Y=[(\mathbb{CP}^2)^{\#4} \times S^2]^\circ$ and show the class of the puncture $\alpha \in \pi_5(Y) \otimes \mathbb{R}$ is in the kernel of the Hurewicz homomorphism but has homotopical distortion $o(L^6)$. The best current lower bound on the distortion of $\alpha$ is $\Omega(L^5)$, given by composing $\alpha$ with degree $L^5$ $L$-Lipschitz self maps of $S^n$. 

\subsection{Statement of results}

In this paper we investigate the homotopy class from the end of previous section under the lens of cohomological distortion. 

\begin{thmx}\label{thma} Let $Y = [(\mathbb{CP}^2)^{\#4} \times S^2]^\circ$ and $\alpha \in \pi_5(X) \otimes \mathbb{Q}$ be the class of the puncture. Then there exists a $\beta \in H^4(\Omega Y)$ such that its image under the dual Hurewicz detects $\alpha$ and such that $\beta$ has cohomological distortion $\Omega(L^6/\text{log}L)$.
\end{thmx}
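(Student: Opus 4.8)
The plan is to reduce Theorem~\ref{thma} to a geometric construction of $4$–cycles in $\Omega Y$ and then build those cycles from the secondary Whitehead product structure of the puncture class, obtaining the extra efficiency over the elementary $\Omega(L^{5})$ bound from Gromov's quadratic distortion of Hopf invariants. Write $A=(\mathbb{CP}^2)^{\#4}$, so that $Y$ is the $4$–skeleton of $A\times S^{2}$: up to homotopy it is $\bigvee_{i=1}^{4}S^2_{a_i}\vee S^2_b$ with four ``product'' $4$–cells attached along the Whitehead products $[\iota_{a_i},\iota_b]$ and one ``$A$'' $4$–cell attached along $\sum_i\eta_{a_i}$, and $A\times S^2=Y\cup_\alpha e^{6}$. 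In a loop–space chain model $C_*(\Omega Y;\mathbb Q)$ (Adams--Hilton, computing $H_*(\Omega Y;\mathbb Q)=U(L)$ for the homotopy Lie algebra $L=\pi_{*+1}(Y)\otimes\mathbb Q$), the desuspension $\widehat\alpha\in\pi_4(\Omega Y)\otimes\mathbb Q$ is represented by a secondary Samelson product
\[
 \widehat\alpha=[u,\iota_b]+\sum_{i=1}^{4}[r_i,\iota_{a_i}]+(\text{lower--order corrections}),
\]
where $\iota_{a_i},\iota_b$ are the $1$–cycles given by the sweepouts of the $2$–spheres, $r_i$ is a $3$–chain bounding the $2$–cycle $[\iota_{a_i},\iota_b]$ (produced by the product $4$–cell $a_i\times b$), and $u$ is a $3$–chain bounding $\tfrac12\sum_i\iota_{a_i}^{2}$ (produced by the $A$–cell). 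Because the rational Hurewicz map of a loop space is the inclusion $L\hookrightarrow U(L)$ of primitives, it is injective; so $\widehat\alpha\neq0$ in $H_4(\Omega Y;\mathbb Q)$ and we may fix $\beta\in H^{4}(\Omega Y;\mathbb Q)$ with $\langle\beta,\widehat\alpha\rangle\neq0$, chosen moreover to annihilate all decomposable degree–$4$ classes, so that $\tau^{\vee}(\beta)$ detects $\alpha$ as in Proposition~\ref{distortioncomparison} and the terminal pairing sees only the primitive part of the cycle we construct. It then suffices to produce $4$–cycles $Z_L$ with $\mathrm{suplength}(Z_L)\le CL$, $\mathrm{Vol}(Z_L)\le L^{4}$, and $\langle\beta,Z_L\rangle=\Omega(L^{6}/\log L)$.

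\emph{The source of the improvement.} The elementary family $L^{4}\{L\}_*\zeta$ — $\zeta$ a fixed cycle representing $\widehat\alpha$ — gives only $\Omega(L^{5})$: by the metric setup of Section~\ref{metricsetup}, $\{L\}$ is a $C^{0}$–isometry of $\Omega Y$, so it transports fixed chains at bounded volume while lengthening loops linearly, and on homology it is the $L$–power operation of $U(L)$, multiplying the primitive $\widehat\alpha$ by only $L$ (its larger scalars land on decomposables invisible to $\beta$). The extra factor is supplied by Gromov's $L^{4}$–distortion of the Hopf class: the toroidal power map $\phi_s(z_1,z_2)=(z_1^{s},z_2^{s})$ on $S^{3}$ is $O(s)$–Lipschitz of degree $\sim s^{2}$, so $\eta\circ\phi_s\colon S^{3}\to S^{2}$ is $O(s)$–Lipschitz of Hopf invariant $\sim s^{4}$; since $\pi_3(\mathbb{CP}^2)=0$, coning $\phi_s$ inside the characteristic map of the $\mathbb{CP}^{2}$–$4$–cell gives an $O(s)$–Lipschitz nullhomotopy of it inside $\mathbb{CP}^{2}_{(i)}\subset Y$, and looping produces a $3$–chain in $\Omega Y$ that bounds a scaled copy of $\tfrac12\sum_i\iota_{a_i}^2$ carrying homological weight far larger than its Lipschitz constant would suggest. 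Concatenation being $1$–Lipschitz up to reparametrization, this ``Hopf–heavy'' filling can then be combined with scaled sweepouts of the $2$–spheres via the secondary Samelson product to yield cycles whose \emph{primitive} class grows like a high power of $L$ at controlled volume and suplength — cycles with no reason to be realizable by maps from a sphere, which is exactly why the cohomological distortion can beat the current $\Omega(L^{5})$ for the homotopical distortion.

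\emph{Assembly and the logarithm.} Concretely I would form $Z_L$ as a sum over $\sim\log L$ dyadic scales $s$ of cycles $[u_{s},\widehat h_{s}]+\sum_i[r_{i,s},\widehat g^{(i)}_{s}]+(\text{Jacobi corrections})$ — the Hopf–heavy fillings $u_s$, the analogous product–cell fillings $r_{i,s}$, and degree–matched sweepouts $\widehat g^{(i)}_s,\widehat h_s$ — with the corrections realizing the chain–level Jacobi identity $\tfrac12[[x,x],y]+[[x,y],x]\simeq0$, and with the volume budget shared among the scales. Bilinearity of the bracket makes the primitive part of each scale's contribution a large scalar times $\widehat\alpha$, all other terms being decomposables killed by $\beta$; the volume at each scale is estimated by submultiplicativity of mass under Pontryagin products and commutators; and the point is that the Hopf–heavy filling cannot be made arbitrarily efficient at a single scale, so the budget must be spread over all $\sim\log L$ scales, the top one dominating. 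Optimizing the scalars subject to the total volume constraint $\le L^{4}$ then yields $\langle\beta,Z_L\rangle=\Omega(L^{6}/\log L)$ with $\mathrm{suplength}(Z_L)\le CL$.

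\emph{Main obstacle.} The crux is the quantitative realization just sketched: constructing the Hopf–heavy fillings and the Jacobi–correction system with all volumes at their claimed sizes, and — running alongside — the Hopf–algebra bookkeeping, since the $L$–power combinatorics of $U(L)$, the re–association of iterated concatenations, and the matching of the various scalings produce many decomposable terms, and one must check that after pairing with $\beta$ exactly the primitive $\widehat\alpha$–coefficient of order $L^{6}/\log L$ survives, with no cancellation. One should also argue that the logarithmic loss is intrinsic to this approach, consistent with Manin--Berdnikov's $o(L^{6})$ upper bound on the homotopical distortion and with $\delta_\beta(L)=\Theta(L^{6})$ being the expected truth. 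Everything else — the minimal–model identification of $\widehat\alpha$, the isometry properties of $\{L\}$, and the individual volume and suplength bounds — is routine given Sections~\ref{metricsetup} and~\ref{gromovdistortion}.
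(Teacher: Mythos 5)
Your high-level architecture matches the paper: the Adams--Hilton/Samelson decomposition $Z_1=[B,D]+2\sum_i[A_i,C_i]$ (your $[u,\iota_b]+\sum_i[r_i,\iota_{a_i}]$), the Jacobi-identity cancellation, the use of a $\beta$ annihilating decomposables, and the expectation that a $\log L$ loss is intrinsic. But the heart of the paper's argument --- Theorem~\ref{thmb} --- is absent from your proposal, and the mechanism you substitute for it does not work.

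The paper's crucial technical step is the multi-scale homology $P(Z_1,Z_2,L)$ of volume $O(L\log L)$ between $[\{L\}_*Z_1,\{L\}_*Z_2]$ and $L\{L\}_*[Z_1,Z_2]$, built out of $\log L$ dyadic pieces each of volume $O(L)$. This lets one set $C_{i,L}=L\{L\}_*C_i+P(A_i,B,L)$: the first summand is the ``payload'' (volume $O(L)$, carries the full homological weight $L^2$ against $\smallint c_1$), while the second fixes the boundary to be $[\{L\}_*A_i,\{L\}_*B]$ at cost $O(L\log L)$ in volume and contributes nothing against $\beta=[\smallint a_1c_1]$ because it lives over the $2$-skeleton. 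You have nothing playing the role of $P$. Your ``Hopf-heavy filling'' $u_s$ is proposed instead, but its analysis contains an error: for $\phi_s(z_1,z_2)=(z_1^s,z_2^s)$ on $S^3$, $\deg\phi_s=s^2$ and therefore $H(\eta\circ\phi_s)=\deg(\phi_s)\cdot H(\eta)=s^2$, not $s^4$ as you assert --- the $L^4$ distortion of the Hopf class comes from \emph{post}-composing $\eta$ with a degree-$L^2$ self-map of $S^2$, not from precomposing with a self-map of $S^3$. Even granting the weight, you do not bound the volume of the nullhomotopy of $\eta\circ\phi_s$ inside $\mathbb{CP}^2$ nor explain how this produces a filling of the needed commutator $\sum_i[\iota_{a_i},\iota_{a_i}]$ at the needed scale. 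Your dyadic sum over $s$ is also placed at the top level, $Z_L=\sum_s(\cdot)$, whereas in the paper the multi-scale structure lives \emph{inside} each $C_{i,L}$ and $D_L$; summing $4$-cycles at the top level would also need careful bookkeeping to keep the total a cycle of controlled volume.

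You yourself flag in the ``Main obstacle'' paragraph that the quantitative realization of the fillings and Jacobi corrections has not been carried out, and that is exactly where the gap lies. As written, your construction would revert to the trivial $\Omega(L^5)$ bound (which the paper notes is what one gets from a naive $O(L^2)$-volume filling). To complete the argument along the paper's lines you need to prove the analogue of Theorem~\ref{thmb}: a homology of volume $O(L\log L)$ between $[\{L\}_*Z_1,\{L\}_*Z_2]$ and $L\{L\}_*[Z_1,Z_2]$, supported over the $2$-skeleton, assembled from $\log L$ dyadic scales inside loop space.
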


Theorem \ref{thma} is proved by explicitly constructing a $\beta$ in terms of iterated integrals, and explicitly constructing an efficient family of $4$-cycles $Z_L$ on $\Omega Y$. At the heart of the construction of $Z_L$ is the construction of the following efficient (small volume) homology which exists in the loop space of any Riemannian manifold $X$, not just the $Y$ stated above.

\begin{thmx}\label{thmb} Let $X$ be a Riemannian manifold and let $L > 0$ be a power of two. For any $ n\geq 1 $, fix $n$-cycles $[S^n]$ representing the fundamental class of $S^n$. Let $f_1: S^{n_1+1} \to X$ and $f_2: S^{n_2+1} \to X$ be Lipschitz maps. For $i=1,2$ let $Z_i$ be the $n_i$-cycle $(\widehat{f_i})_*[S^{n_i}]$ in $\Omega X$, where $\widehat{f_i}$ is the desuspension of $f_i$. Let $[\cdot,\;\cdot]$ denote the graded commutator associated to the Pontryagin product on chains on  $\Omega X$.  Then the two homologous $(n_1+n_2)$-cycles
$$[\{L\}_*Z_1, \{L\}_*Z_1] \;\;\;\text{and}\;\;\; L\{L\}_*[Z_1, Z_2]$$ admit a homology $P = P(f_1, f_2, L)$ with suplength at most $CL$ and volume $CL\text{log}L$. Here $C > 0$ is a constant depending only on $n_i$, the Lipschitz constants of the maps $f_i$, and the choice of metric on $S^{n_i}$. In particular, $C$ is independent of $L$ and $X$. Moreover, if the image of $f_i$ lie in some skeleton $X^{(k)}$ of $X$ then $P$ can be taken to be supported in the subspace $\Omega X^{(k)} \subset \Omega X$.
\end{thmx}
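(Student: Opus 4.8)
\emph{Overview of the plan.} The statement has a soft part (the two cycles are homologous) and a hard part (producing a homology with the claimed volume and suplength bounds). The soft part is essentially formal: writing $a(p)=\widehat{f_1}(p)$, $b(q)=\widehat{f_2}(q)$, the power map factors as $\{L\}=\mu^{(L)}\circ\Delta^{(L)}$ (iterated diagonal, then iterated concatenation) on $\Omega X$, so $\{L\}_*$ is multiplication by $L$ on primitives of the Pontryagin–Hopf algebra $H_*(\Omega X)$. The cycles $\{L\}_*Z_i=(\{L\}\circ\widehat{f_i})_*[S^{n_i}]$ are Hurewicz images, hence primitive with class $L[Z_i]$; the bracket of primitives is primitive and corresponds under Hurewicz to the Samelson product, so $[Z_1,Z_2]$ is spherical and both $[\{L\}_*Z_1,\{L\}_*Z_2]$ and $L\{L\}_*[Z_1,Z_2]$ represent $L^2[Z_1,Z_2]$. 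In particular everything below can be carried out with spherical cycles, and $C$ will be allowed to depend on the Lipschitz constants $\mathrm{Lip}(f_i)$, the $n_i$, and the metrics on the $S^{n_i}$, but not on $L$ or $X$.

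\emph{The dyadic recursion.} I would construct the homology $P=P(f_1,f_2,L)$ by induction on $k$ where $L=2^k$, the case $k=0$ being vacuous. For the inductive step I use $\{2L\}=\{2\}\circ\{L\}$, so that, setting $W_i=\{L\}_*Z_i$ (spherical cycles whose loops have length $O(L)$ and whose parametrizing maps from $S^{n_i}$ still have bounded Lipschitz constant), one has $[\{2L\}_*Z_1,\{2L\}_*Z_2]=[\{2\}_*W_1,\{2\}_*W_2]$. Two lemmas then do the work. First, a \textbf{doubling/commutation lemma}: for any spherical cycles $W_1,W_2$ one has $[\{2\}_*W_1,\{2\}_*W_2]=2\,\{2\}_*[W_1,W_2]+\partial Q$ with $\mathrm{vol}(Q)\le C\cdot\mathrm{suplength}(W_i)$ and $\mathrm{suplength}(Q)\le C\cdot\mathrm{suplength}(W_i)$; this is an explicit chain-level homotopy that ``transposes blocks'' in the word $\mathsf a^2\mathsf b^2$ versus $(\mathsf{ab})^2$ by dragging one loop once around another at unit speed, whose failure to be closed is exactly the Samelson obstruction and which therefore splits off a copy of the bracket cycle. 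Second, a \textbf{power lemma}: for a spherical cycle $V$, $\{2\}_*V$ is homologous to $2V$ by a homology whose volume is controlled by a fixed base homology (obtained from the diagonal $S^m\to S^m\times S^m$ and the co-$H$ structure of $S^{m+1}$), transported through powers of $\{2\}$; applied to $V=\{L\}_*[Z_1,Z_2]$ this gives $\{2L\}_*[Z_1,Z_2]=2\,\{L\}_*[Z_1,Z_2]+\partial S$ with $\mathrm{vol}(S)=O(1)$ and $\mathrm{suplength}(S)=O(L)$.

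\emph{Assembling and the bookkeeping.} Combining these: apply the doubling lemma to $(W_1,W_2)$, then replace $[W_1,W_2]=[\{L\}_*Z_1,\{L\}_*Z_2]$ using the inductive homology $P_L$, then move the resulting $2L\,\{L\}_*[Z_1,Z_2]$ to $2L\,\{2L\}_*[Z_1,Z_2]$ via $L$ copies of the power-lemma homology $S$. Since $\{2\}$ (hence $\{2\}_*$ on chains) is essentially distance-doubling in the loop-space metric, $\mathrm{vol}(\{2\}_*P_L)\le 2\,\mathrm{vol}(P_L)$, and one gets a recursion of the shape $\mathrm{vol}(P_{2L})\le 2\,\mathrm{vol}(P_L)+O(L)$, $\mathrm{suplength}(P_{2L})\le 2\,\mathrm{suplength}(P_L)+O(L)$, which solves to $\mathrm{vol}(P_L)\le CL\log L$ and $\mathrm{suplength}(P_L)\le CL$ — the $\log L$ being precisely the number $\log_2 L$ of levels, each contributing an $O(L)$ term. (Equivalently, the construction is a balanced binary ``merge-sort'' of the word $\mathsf a^{L}\mathsf b^{L}$ into $(\mathsf{ab})^{L}$: the $\binom{L}{2}$ inversions are resolved $\log_2 L$ levels at a time at linear cost per level, which is why $L$ is taken a power of two and why bubble-sort — $\binom{L}{2}$ adjacent transpositions — would give the wrong bound $O(L^2)$.)

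\emph{Main obstacle and the skeleton refinement.} The crux is the doubling/commutation lemma: its class-level content ($[2W_1,2W_2]=4[W_1,W_2]=2\cdot 2[W_1,W_2]$) is immediate, but one must produce an \emph{explicit} homology between these specific cycles of loops of length $O(\mathrm{suplength}(W_i))$ whose volume is \emph{linear} in that length and whose constant is uniform over all spherical $W_i$; getting the drag to sweep a region of bounded cross-sectional volume, and identifying the split-off cycle honestly with the bracket cycle (up to a further boundary of volume $O(\mathrm{suplength})$) so that the recursion closes, is where the real care is needed — in particular this is the step that forces the $\log$ and cannot obviously be improved. The skeleton statement is then free: every loop occurring anywhere in $Q$, $S$, or $P_L$ is a concatenation of reparametrized arcs of the loops $a(p)$, $b(q)$ (traversed forwards or backwards during the commutator drags), all of which lie in $\mathrm{im}(f_1)\cup\mathrm{im}(f_2)$; hence if $f_i(S^{n_i+1})\subseteq X^{(k)}$ then $P$ is supported in $\Omega X^{(k)}$.
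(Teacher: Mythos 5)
Your dyadic recursion is the paper's argument in unwound form: the paper's explicit telescoping sum $P=\sum_{i=0}^{k-1}2^i\{2^i\}_*P'(\{2^{k-1-i}\}_*Z_1,\{2^{k-1-i}\}_*Z_2)$ is exactly what iterating the inductive step $P_{2L}=2\{2\}_*P_L+Q$ produces, and your ``doubling/commutation lemma'' is the paper's single-scale lemma $\partial P'=2\{2\}_*[Z_1,Z_2]-[\{2\}_*Z_1,\{2\}_*Z_2]$ with suplength $\lesssim L_1$ and volume $\lesssim L_0^{n_1+n_2}(L_0+L_1)$. Where the paper does real work that your sketch only gestures at is in the proof of that lemma: it is built in three explicit pieces, namely pushforwards of fixed diagonal-to-bouquet homologies in $S^{n_i}\times S^{n_i}$ and in $(S^{n_1}\times S^{n_2})^{\times 2}$ (exploiting exactly the co-$H$/spherical structure you identify) bridged by a linear reparametrization homotopy $H_s$ that shuffles constant-loop factors $\bullet$ and whose swept volume is $\text{suplength}\cdot\text{vol}$; your ``drag one loop around another'' picture is morally this $P_2$ step but you do not supply the estimate that it sweeps a region of linear volume, which is the actual crux.

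Two bookkeeping slips you should fix. First, the ``power lemma'' step is unnecessary: since $\{2\}_*\{L\}_*=\{2L\}_*$, applying $\{2\}_*$ to the inductive homology already carries $L\{L\}_*[Z_1,Z_2]$ to $L\{2L\}_*[Z_1,Z_2]$, so the intermediate term $2L\{L\}_*[Z_1,Z_2]$ you propose to correct never appears. (It is also not clear the power lemma as you state it applies, since $\{L\}_*[Z_1,Z_2]$ is primitive but not spherical.) Second, the suplength recursion as you wrote it, $\mathrm{suplength}(P_{2L})\le 2\,\mathrm{suplength}(P_L)+O(L)$, does \emph{not} solve to $O(L)$ --- it solves to $O(L\log L)$. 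Suplength of a sum of chains is the maximum, not the sum, of the summands' suplengths, so the correct recursion is $\mathrm{suplength}(P_{2L})\le\max(2\,\mathrm{suplength}(P_L),\;O(L))$, which does give $O(L)$; this distinction between how volume and suplength aggregate across scales is precisely why the theorem gets $L\log L$ for one and $L$ for the other. (A third, harmless, slip: in the paper's metric $\{2\}$ is $1$-Lipschitz on constant-curfew subspaces, not distance-doubling, which only improves your volume estimate.)
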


The punchline of Theorem \ref{thmb} is that we can find a $\lesssim L$-suplength homology between these two cycles with volume at most a constant times $L\text{log}L$. A weaker bound of $L^2$ is easier to derive, but then only gives a lower bound of the cohomological distortion of $\beta \in H^4(\Omega Y)$ of $L^5$. This is the same bound that comes from the trivial bound on the homotopical distortion of $\alpha$. It is not currently known if Theorem \ref{thmb} could be improved to give a linear bound on the volume of $Y$. If so, it would give a lower bound on the cohomological distortion of $\beta$ of $L^6$, strictly asymptotically larger than Berdnikov and Manin's upper bound of $o(L^6)$ for the homotopical distortion of $\alpha$.

\subsection{Outline of the paper}

In Section 2, we set up the geometric and metric notions needed in the paper. Section 3 contains the proof of Theorem \ref{thmb}, i.e. the construction of a small-volume homology between a certain pair of homologous cycles in $\Omega X$ for any $X$. Section 4 contains the proof of Theorem \ref{thma}: the construction of the family of cycles in $\Omega Y$ for $Y=[(\mathbb{CP}^2)^{\#4} \times S^2]^\circ$ and the the geometric and topological bounds on these cycles.

\subsection{Acknowledgements}

I would like to thank my advisor, Larry Guth, for constant advice and enthusiasm. I would also like to thank Fedya Manin for helpful conversations and explanations; much of this work is inspired by the content and questions in \cite{shadows} and \cite{scalspaces}. I also benefited from Dev Sinha's seminar at MIT in Spring 2020 and the geometric insights exposited there. Finally I would like to thank Haynes Miller and Luis Kumandari for helpful conversations and useful comments. 

\section{Metric setup} \label{metricsetup}

Let $X$ be a Riemannian manifold with basepoint $x_0$. By $\Omega X$ we mean the space of piecewise smooth \textit{Moore loops}, i.e. a point in $\Omega X$ is a pair $(a, \gamma)$ with \textit{curfew} $a \geq 0$ a real number and piecewise smooth $\gamma: [0, a] \to X$ with $\gamma(0) = \gamma(a) = x_0 \in X$. In this model $\Omega X$ is a strictly associative $H$-space with $n$-ary multiplication
\[ \mu_n: (\Omega X)^{n} \to \Omega X \]
given by summing of curfews and concatenation of loops.

Let $C_*(\Omega X)$ denote the (real) cubical chain complex. The concatenation multiplication $\mu_2: \Omega X \times \Omega X \rightarrow \Omega X$ induces a multiplication \[\cdot: C_m(\Omega X) \otimes C_n(\Omega X) \rightarrow C_{m+n}(\Omega X)\] on chains. The graded commutator associated to this multiplication is given by
\[ [Z_1, Z_2] \coloneqq Z_1 \cdot Z_2 - (-1)^{|Z_1||Z_2|} Z_2 \cdot Z_1. \]

\begin{remark}
The notion of curfew is a technical requirement to ensure that the Jacobi relation is satisfied on the nose for the Lie bracket that will be defined on $C_*(\Omega X)$ below. We will always be considering cycles of constant curfew, although the size of this curfew may vary. The curfew is unimportant to the geometry, and is also distinct from the much more important quantity \textit{suplength} defined in Section \ref{intro}.
\end{remark}

The distance metric on $\Omega X$ we will use is given by
\[d_{\Omega X}((a_1, \gamma_1), (a_2, \gamma_2)) \coloneqq |a_1 - a_2| + \sup_{t \in [0,1]} d_X(\gamma_1(ta_1), \gamma_2(ta_2)). \]

This metric on $\Omega X$ gives an $n$-dimensional Hausdorff measure on $\Omega X$ for each $n$. For a simplex $\sigma$ in $\Omega X$, define the $n$-volume of $\sigma$ as the $n$-dimensional Hausdorff measure of its image. Given an arbitrary real chain $c = \sum \lambda_i \sigma_i$, define the volume of $c$ by $\sum |\lambda_i| \text{Vol}(\sigma_i)$.

There are other ways to define the volume of a chain. For example, the Riemannian metric $g$ on $X$ gives $\Omega X$ the structure of an infinite-dimensional Finsler manifold in the following way. Given a piecewise smooth loop $\gamma \in \Omega X$ with curfew $a$, the tangent space at $\gamma$, denoted $T_{\gamma}\Omega X$, is the space of piecewise smooth vector fields along $\gamma$ which vanish at the endpoints of $\gamma$. Given such a vector field $V$, define the norm on $V$ to be $$||V||_{\infty} = \sup_{t \in [0,a]}||V(t)||_{(X,g)} $$ where the norm on the right hand side is from the Riemannian metric. From here, if we assume our chain $Z$ is a pseudomanifold then we can pull back the Finsler metric to $Z$ and evaluate the volume of $Z$ using this pullback metric.

These two notions of volume agree, up to a constant. The important point that will be used in this paper is that given two chains $Z_1$ and $Z_2$ of dimensions $m$ and $n$ respectively, there exists a constant $C$ depending only on the dimensions of $Z_1$ and $Z_2$ such that $\text{Vol}_{m+n}(Z_1 \cdot Z_2) \leq C \text{Vol}_{m}(Z_1) \text{Vol}_{n}(Z_2)$. 

Of interest to us will be the map $\{L\}: \Omega X \to \Omega X$ given by (for any $L \in \mathbb{N}$) the composite of the $L$-fold diagonal and $L$-fold multiplication
\[ \Omega X \xrightarrow{\Delta_L} (\Omega X)^L \xrightarrow{\mu_L} \Omega X. \]

On the constant-curfew subspaces of $\Omega X$, $\{L\}$ is $1$-Lipschitz and so $\{L\}_*$ is non-increasing on the volume of constant-curfew chains. It does however multiply the suplength of chains by a factor of $L$, and multiply the curfew of points by a factor of $L$.

Given two chains $Z_1, Z_2$ with suplengths $L_1, L_2$ and volumes $V_1, V_2$ respectively, the chain $[Z_1, Z_2]$ has suplength at most $L_1+L_2$ and volume at most $C V_1 V_2$ for some constant $C$ depending only on the dimensions of the chains.

A \textit{spherical cycle} is one that is equal to $f_*[S^n]$ for some map $f: S^n \to \Omega X$ and some cycle $[S^n]$ representing the fundamental class of $S^n$. Such cycles are \textit{primitive} (they evaluate to zero on all nontrivial cup products), but the converse is not true: for example the graded commutator $[Z_1, Z_2]$ of two spherical is a cycle that is primitive but not spherical. The two notions do agree on the level of homology: a primitive cycles is homologous to a spherical cycle by the Milnor-Moore theorem \cite{milnormoore}.

Finally, recall Samelson's theorem \cite{samelson}. Let $\tau: \pi_{n+1}(X) \to H_n(\Omega X)$ be the Hurewicz map. Given $\alpha_1 \in \pi_{n_1+1}(X)$ and $\alpha_2 \in \pi_{n_2+1}(X)$ for $n_i \geq 1$, denote their Whitehead product by $[\alpha_1, \alpha_2]_{\text{Wh}} \in \pi_{n_1+n_2+1}(X)$. Then
$$ \tau [\alpha_1, \alpha_2]_{\text{Wh}} = (-1)^{n_1} [\tau \alpha_1,\; \tau \alpha_2] $$ 
where the bracket on the right hand side is the graded commutator in on $H_*(\Omega X)$.

\section{The proof of Theorem \ref{thmb}}

Here we restate Theorem \ref{thmb}.

\begin{theorem} Let $X$ be a Riemannian manifold and let $L > 0$ be a power of two. For any $ n\geq 1 $, fix $n$-cycles $[S^n]$ representing the fundamental class of $S^n$. Let $f_1: S^{n_1+1} \to X$ and $f_2: S^{n_2+1} \to X$ be Lipschitz maps. For $i=1,2$ let $Z_i$ be the $n_i$-cycle $(\widehat{f_i})_*[S^{n_i}]$ in $\Omega X$, where $\widehat{f_i}$ is the desuspension of $f_i$. Then the two homologous $(n_1+n_2)$-cycles
$$[\{L\}_*Z_1, \{L\}_*Z_1] \;\;\;\text{and}\;\;\; L\{L\}_*[Z_1, Z_2]$$ admit a homology $P = P(f_1, f_2, L)$ with suplength at most $CL$ and volume $CL\text{log}L$. Here $C > 0$ is a constant depending only on $n_i$, the Lipschitz constants of the maps $f_i$, and the choice of metric on $S^{n_i}$. In particular, $C$ is independent of $L$ and $X$. Moreover, if the image of $f_i$ lie in some skeleton $X^{(k)}$ of $X$ then $P$ can be taken to be supported in the subspace $\Omega X^{(k)} \subset \Omega X$.
\end{theorem}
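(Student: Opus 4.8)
The plan is to reduce the statement to a purely combinatorial identity about the Pontryagin product of $L$-fold powers, and then to realize that identity geometrically by an explicit, economical homology built out of concatenation cells. The starting point is the standard graded-commutator identity: for any chain $Z$ on an $H$-space, $[ZZ, ZZ]$ — more precisely $[\{L\}_*Z_1, \{L\}_*Z_2]$ — expands, via bilinearity of the bracket over the Pontryagin product, as a sum over pairs of ``slots'' in the two $L$-fold concatenations. Concretely, $\{L\}_*Z_i$ is homologous (indeed equal, in the strictly associative Moore model) to the $L$-fold Pontryagin power $Z_i^{\cdot L}$; expanding $[Z_1^{\cdot L}, Z_2^{\cdot L}]$ and using that $Z_1, Z_2$ are spherical (hence that $Z_i \cdot Z_j$ is homologous to $(-1)^{|Z_i||Z_j|} Z_j \cdot Z_i$ through a small homology coming from the loop-rotation / little-disks structure) one finds $[Z_1^{\cdot L}, Z_2^{\cdot L}]$ is homologous to $L^2 \cdot (Z_1 \cdot Z_2^{\cdot} \text{-pattern})$, while $L\{L\}_*[Z_1,Z_2] = L \cdot [Z_1,Z_2]^{\cdot L}$-type expression contributes $L \cdot L = L^2$ of the ``diagonal'' commutators. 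So at the level of homology the two cycles agree; the entire content of the theorem is the \emph{geometry} of the chain $P$ that witnesses this — its suplength must be $O(L)$ and its volume only $O(L\log L)$ rather than the naive $O(L^2)$.

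The key device for the $\log L$ saving is a divide-and-conquer (binary) construction, which is why $L$ is taken to be a power of two. Write $L = 2^m$. The naive homology pairs up all $\binom{L}{2} \sim L^2$ cross terms one at a time, each contributing a fixed-volume piece, giving $O(L^2)$. Instead I would set up a recursion: let $W(2^k)$ denote the ``commutator-rearrangement'' homology relating $[\{2^k\}_*Z_1, \{2^k\}_*Z_2]$ to $2^k \{2^k\}_*[Z_1,Z_2]$. Splitting each $2^k$-fold power as a product of two $2^{k-1}$-fold powers, $\{2^k\}_*Z_i = \{2^{k-1}\}_*Z_i \cdot \{2^{k-1}\}_*Z_i$, one expands the outer bracket using the Leibniz rule for the graded commutator, $[AB,CD] = A[B,CD] + (-1)^{\cdots}[A,CD]B$ (twice), reducing the size-$2^k$ problem to four size-$2^{k-1}$ brackets plus ``glue'' homologies that commute a $\{2^{k-1}\}_*Z_j$ block past a $Z_i$ block. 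Crucially, the glue homology at stage $k$ has suplength $O(2^k)$ (the total loop length in play) but \emph{constant} volume times the volume of the blocks being swapped — and there are $O(1)$ such glue pieces per stage, with $m = \log_2 L$ stages, and the blocks at stage $k$ have volume $O(1)$ (independent of $L$, since each is $\{2^{k-1}\}_*$ of a spherical cycle and $\{N\}$ is $1$-Lipschitz hence volume-nonincreasing). Summing the contributions over the $\log L$ levels of the recursion tree — being careful that the number of pieces at level $k$ does not itself blow up — yields total volume $O(L \log L)$, the $L$ coming from the overall multiplicity in front of the final collection of diagonal commutators and the $\log L$ from the depth of the recursion.

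In more detail, the atomic geometric input I would isolate as a lemma is: if $A$ and $B$ are chains in $\Omega X$ that are concatenations of \emph{spherical} cycles (so each admits an $S^1$-family of reparametrizations/rotations, coming from the fact that $\widehat{f}$ factors through loop rotation), then $A\cdot B$ and $(-1)^{|A||B|}B\cdot A$ are homologous via an explicit homology of suplength $\text{suplength}(A)+\text{suplength}(B)$ and volume $C\,\text{Vol}(A)\,\text{Vol}(B)$ — this is the usual ``Dehn twist / rotate the two halves of the loop past each other'' homotopy, which for Moore loops lives entirely within loops of length $\le \text{suplength}(A)+\text{suplength}(B)$ and is supported in $\Omega X^{(k)}$ whenever $A,B$ are. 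Applying this with $A = \{2^{k-1}\}_*Z_i$ (volume $O(1)$) and $B$ a single block (volume $O(1)$) gives glue pieces of volume $O(1)$ and suplength $O(2^k)$, all in the right skeleton. Assembling: $P$ is the signed sum of all these glue pieces across all recursion stages, reparametrized into honest $(n_1+n_2)$-chains; tracking signs is exactly the bookkeeping of the graded Leibniz rule, and the suplength bound $CL$ holds because every chain appearing is supported in loops of length at most (total length of everything) $= O(L)$.

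The main obstacle I anticipate is \emph{not} the homological identity but making the binary recursion's volume accounting honest: one must check that expanding $[AB,CD]$ repeatedly does not generate exponentially many glue pieces (it would if one were careless — each Leibniz step doubles the term count), so the recursion must be organized so that at stage $k$ one only performs $O(1)$ swaps to bring the size-$2^k$ configuration into the form ``(size $2^{k-1}$ solved) followed by (size $2^{k-1}$ solved) plus $O(1)$ cross terms,'' and the cross terms must themselves be handled by a \emph{single} application of the atomic commutation lemma to \emph{blocks} of size $2^{k-1}$ (volume $O(1)$), not by further subdivision. Equivalently, the right statement to induct on is not ``$[\{2^k\}_*Z_1,\{2^k\}_*Z_2] \sim 2^k\{2^k\}_*[Z_1,Z_2]$'' but a more refined one about moving a single $Z_2$-block leftward through a power of $Z_1$ at volume cost $O(\text{length traversed})$ — i.e. an Eilenberg–Zilber / shuffle-type homology whose volume is linear, not quadratic, in the power — and then the $\log L$ is the number of times the refined statement must be invoked. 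Verifying that the skeleton constraint is preserved at every step is routine given that all constructions are natural in $X$ and the loop-rotation homotopies do not leave the image of the loops involved.
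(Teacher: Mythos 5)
Your high-level scaffolding matches the paper's: a one-scale homology comparing $[\{2\}_*W_1,\{2\}_*W_2]$ with $2\{2\}_*[W_1,W_2]$, applied at the $\log L$ dyadic scales $W_j=\{2^{k-1-i}\}_*Z_j$ and summed into the telescope $P=\sum_{i=0}^{k-1}2^i\{2^i\}_*P'(\{2^{k-1-i}\}_*Z_1,\{2^{k-1-i}\}_*Z_2)$, with each summand contributing volume $O(L)$. Your concern about exponential proliferation of glue pieces is well placed, and this telescope is precisely how the paper keeps the count linear in the depth.

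The fatal gap is the atomic lemma you isolate: that for spherical $A,B$ the chains $A\cdot B$ and $(-1)^{|A||B|}B\cdot A$ are homologous via a small ``rotate the halves past each other'' homology. This is false. If it held, the graded commutator $[A,B]=A\cdot B-(-1)^{|A||B|}B\cdot A$ would be a boundary, so by Samelson's theorem the Whitehead product would vanish on all spherical classes; already for $X=S^2\vee S^2$ the two generators of $H_1(\Omega X)$ have a nonzero commutator in $H_2(\Omega X)$. The $S^1$-rotation you invoke lives on the \emph{free} loop space and does not preserve the basepoint, and $\Omega X$ is only an $E_1$-space, so there is no little-disks homotopy commutativity to appeal to. The paper's one-scale homology $P'$ never commutes a nonconstant block past another nonconstant block. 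Instead it pushes forward a fixed diagonal-to-bouquet homology inside $S^{n_1}\times S^{n_1}$, $S^{n_2}\times S^{n_2}$, and $(S^{n_1}\times S^{n_2})^{\times 2}$ to rewrite $\{2\}_*W$ as $W\bullet+\bullet W$ and $\{2\}_*[W_1,W_2]$ as words in $W_1,W_2,\bullet$ at constant volume cost, and then uses a pure reparametrization homotopy that slides only the \emph{constant} ($\bullet$) segments past nonconstant ones, e.g.\ from $\bullet Z_1 Z_2\bullet$ to $Z_1 Z_2\bullet\bullet$. Sliding $\bullet$ is homotopically free and costs volume $\lesssim\text{Suplength}\cdot\text{Vol}$; it is this reparametrization, not a commutation of spherical blocks, that carries the $O(L)$-per-scale volume, and it is the geometric fact your commutation lemma must be replaced by for the construction to close.
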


\begin{proof} The $L_0$-Lipschitz condition on $f_1$ and $f_2$ implies that $Z_1$ and $Z_2$ have volumes $\text{Vol}(Z_1) \lesssim L_0^{n_1}$ and $\text{Vol}(Z_2) \lesssim L_0^{n_2}$ respectively. To prove the theorem, we will construct pieces of $P$ on many "scales" and the final $P$ will be the sum of the pieces. At a single scale we will construct the homology given in the following lemma.

\begin{lemma} There is a constant $C=C(n_1, n_2)>0$ such that the following holds. Let $Z_1, Z_2$ be constant-curfew spherical $n_1$- and $n_2$-cycles respectively with the same curfew, which are the images of $L_0$ Lipschitz maps $\widehat{f_i}: S^{n_i} \to \Omega X$, and such that $Z_1$ and $Z_2$ have suplength at most $L_1$. Then there is a constant-curfew $(n_1+n_2+1)$-chain $P' = P'(Z_1, Z_2)$ with
\[\partial P' = 2\{2\}_*[Z_1, Z_2] - [\{2\}_* Z_1, \{2\}_* Z_2] \]
and $P$ has suplength at most $4L_1$ and volume at most $CL_0^{n_1+n_2}(L_0+L_1)$.
\end{lemma}

Given the lemma, the homology $P$ satisfying the theorem is built out of the pieces $P'$ at each scale by:
\[ P \coloneqq \sum_{i=0}^{k-1} 2^i \{ 2^i \}_* P'(\{2^{k-1-i}\}_* Z_1, \{2^{k-1-i}\}_* Z_2). \]
In the $i$th summand, the chain $P'(\{2^{k-1-i}\}_* Z_1, \{2^{k-1-i}\}_* Z_2)$ has volume at most $C 2^{k-i-1} L_0^{n_1+n_2+1}$ and suplength at most $4 \cdot 2^{k-1-i} L_0$. So each of the $\text{log}L$ summands has suplength at most $4L L_0$ and volume at most $CLL_0^{n_1+n_2+1}$, as required.

It remains to prove the lemma.

\begin{proof} (of Lemma)
Let the common constant curfew of $Z_1$ and $Z_2$ be $a$. 

\begin{notation} From now on, we will drop the $\cdot$ in the multiplication on chains in $\Omega X$ and denote the multiplication by juxtaposition instead. We will denote the $0$-chain of a constant loop with curfew $a$ by $\bullet$. For example, $Z$ and $Z \bullet$ are similar chains, but to obtain $Z \bullet$ from $Z$ every point in the support of $Z$ is is post-concatenated with the constant loop of curfew $a$. In particular, $Z$ and $Z \bullet$ have the same volume and suplength.
\end{notation}

We will construct the homology in the lemma in three parts:
\begin{enumerate}
    \item First, a homology $P_1$ from  $[\{2\}_* Z_1, \{2\}_* Z_2]$ to an intermediate cycle $Q_1$.
    \item Next, a homology $P_2$ from $Q_1$ to another intermediate cycle $Q_2$. This homology will do nothing more than reparametrize loops in the support of $Q_1$. However, it is this homology that contributes most of the volume to $P$.
    \item Finally, a homology $P_3$ from $Q_2$ to $2\{2\}_*[Z_1, Z_2]$.
\end{enumerate}

To construct $P_1$, the crucial ingredient is the following. Recall that $Z_1 = (\widehat{f_1})_*[S^{n_1}]$, for $\widehat{f_1}: S^{n_1} \to \Omega X$. Moreover $\{2\}_*Z_1 = \mu_*(\widehat{f_1} \times \widehat{f_1})_*(\text{Diag}(S^{n_1}))$ where $\text{Diag}(S^{n_1})$ is the diagonal ${n_1}$-cycle $\Delta_*[S^{n_1}]$ in $S^{n_1} \times S^{n_1}$ and $\mu \coloneqq \mu_2: \Omega X \times \Omega X \to \Omega X$ is loop concatenation. The cycle $\text{Diag}(S^{n_1})$ is homologous to the cycle $\text{Bouquet}(S^{n_1}) = S^{n_1} \times \{*\} \cup \{*\} \times S^{n_1}$ in $S^{n_1} \times S^{n_1}$. Let $Y_{S^{n_1}}$ be a fixed homology between these. Then we can push this forward into $\Omega X$: we get the chain $\mu_*(\widehat{f_1} \times \widehat{f_1})_*(Y_{S^{n_1}})$ which is a homology between $\{2\}_*Z_1$ and $Z_1 \bullet + \bullet Z_1$.

Similarly, we can fix an analogous $(n_2+1)$-chain $Y_{S^{n_2}}$ and get a $(n_2+1)$-chain $\mu_* (\widehat{f_2} \times \widehat{f_2})_* (Y_{S^{n_2}})$ is a homology between $\{2\}_*Z_2$ and $Z_2 \bullet + \bullet Z_2$. The desired homology $P_1$ is the following $(n_1+n_2+1)$-chain: 
\[P_1 \coloneqq [\{2\}_* Z_1, \mu_*(\widehat{f_2} \times \widehat{f_2})_*(Y_{S^{n_2}})] + [\mu_*(\widehat{f_1} \times \widehat{f_1})_*(Y_{S^{n_1}}), Z_2 \bullet + \bullet Z_2]. \]

This is a homology between $[\{2\}_*Z_1, \{2\}_*Z_2]$ and $Q_1$, where $$Q_1 \coloneqq [Z_1 \bullet + \bullet Z_1,\; Z_2 \bullet + \bullet Z_2].$$

The suplength of $P_1$ is at most $4L_0$, as the suplength of each term in the bracket is at most $2L_0$. We also need to compute the volume of $P_1$. The maps $\widehat{f_1}$ and $\widehat{f_2}$ are $L_0$-Lipschitz, so similarly $\mu \circ (\widehat{f_1} \times \widehat{f_1})$ and $\mu \circ (\widehat{f_2} \times \widehat{f_2})$ are $L_0$-Lipschitz too. The chains $Y_{S^{n_1}}$ and $Y_{S^{n_1}}$ have fixed volumes $C(n_1)$ and $C(n_2)$ independent of $\widehat{f_1}$, $\widehat{f_1}$ and $X$. The cycle $\{2\}_*Z_1$ has volume equal to $\text{Vol}(Z_1)$ and the cycle $Z_2 \bullet + \bullet Z_2$ has volume equal to $2\text{Vol}(Z_2)$. Using $[Z', Z'']$ has volume at most $C \text{Vol}(Z')\text{Vol}(Z'')$, the volume of $P_1$ is at most $C L_0^{n_1+n_2+1}$.

Next we construct $P_3$, and will save $P_2$ to last. The idea is similar, except we need to use the analogue of $Y_{S^n}$ but for $S^{n_1} \times S^{n_2}$ and $S^{n_2} \times S^{n_1}$ in place of $S^{n_1}$ and $S^{n_2}$.  In the first part we found a homology from $\text{Diag}(S^{n_1})$ to a linear combination of cycles representing the standard basis of $H_{n_1}(S^{n_1} \times S^{n_1}) \cong H_{n_1}(S^{n_1}) \oplus H_{n_1}(S^{n_1})$. Here, we start with the diagonal $({n_1}+{n_2})$-cycle $\text{Diag}(S^{n_1} \times S^{n_2})$ in $(S^{n_1} \times S^{n_2}) \times (S^{n_1} \times S^{n_2})$. The group $H_{{n_1}+{n_2}}((S^{n_1} \times S^{n_2}) \times (S^{n_1} \times S^{n_2}))$ has a basis represented by
\begin{align*} S^{n_1} \times S^{n_2} \times \{*\} \times \{*\} ,\;\; \{*\} \times S^{n_2} \times S^{n_1} \times \{*\},\\
S^{n_1} \times \{*\} \times \{*\} \times S^{n_2},\;\; \{*\} \times \{*\} \times S^{n_1} \times S^{n_2}.
\end{align*}
Denote by $\text{Bouquet}(S^{n_1} \times S^{n_2})$ the sum of these that is homologous to $\text{Diag}(S^{n_1} \times S^{n_2})$, and let $Y_{S^{n_1} \times S^{n_2}}$ be a homology between them.

Now,
\begin{align*} \{2\}_*[Z_1, Z_2] = \; & (\mu_4)_*(\widehat{f_1} \times \widehat{f_2} \times \widehat{f_1} \times \widehat{f_2})_*\text{Diag}(S^{n_1} \times S^{n_2}) \\ 
&- (-1)^{n_1 n_2}(\mu_4)_*(\widehat{f_2} \times \widehat{f_1} \times \widehat{f_2} \times \widehat{f_1})_*\text{Diag}(S^{n_2} \times S^{n_1})
\end{align*}
and so the $(n_1+n_2+1)$-chain in $\Omega X$
\[P_3 \coloneqq (\mu_4)_*(\widehat{f_1} \times \widehat{f_2} \times \widehat{f_1} \times \widehat{f_2})_*Y_{S^{n_1} \times S^{n_2}} - (-1)^{n_1 n_2}(\mu_4)_*(\widehat{f_2} \times \widehat{f_1} \times \widehat{f_2} \times \widehat{f_1})_* Y_{S^{n_2} \times S^{n_1}} \]
gives the required chain for the third part. This a homology between $\{2\}_*[Z_1, Z_2]$ and $Q_2$, where
\begin{align*}
Q_2 \coloneqq \;& (Z_1 Z_2 \bullet \bullet + Z_1 \bullet \bullet Z_2 + \bullet Z_2 Z_1 \bullet +  \bullet \bullet Z_1 Z_2) \notag \\
& -(-1)^{mn} (Z_2 Z_1 \bullet \bullet + Z_2 \bullet \bullet Z_1 + \bullet Z_1 Z_2 \bullet +  \bullet \bullet Z_2 Z_1) 
\end{align*}
is the pushforward under the right products $\widehat{f_1}$ and $\widehat{f_2}$ of $\text{Bouquet}(S^{n_1} \times S^{n_2})$ and $\text{Bouquet}(S^{n_2} \times S^{n_1})$.

By the same argument as for $P_1$, the suplength of $P_3$ is at most $2L_0$ and the volume of $P_3$ is at most $CL_0^{m+n+1}$.

Finally, we construct the homology $P_2$. Note that both $Q_1$ (after expanding the definition of the brackt) and $Q_2$ are homologous to $4(Z_1 Z_2\bullet \bullet - (-1)^{n_1n_2}Z_2 Z_1 \bullet \bullet)$. Here the extra $\bullet$s have been inserted (arbitrarily) to make this chain have curfew $4a$, the same as $Q_1$ and $Q_2$. We will construct the homology for the second part one summand at a time: for each summand of $Q_1$ and $Q_2$ we will exhibit a homology to either $Z_1 Z_2\bullet \bullet$ or $Z_2 Z_1 \bullet \bullet$.

For example, let's construct a homology between $\bullet Z_1 Z_2 \bullet$ and $Z_1 Z_2 \bullet \bullet$. Let $\bullet_a (\Omega X)_{3a}$ denote the subspace of $\Omega X$ consisting of loops of curfew $4a$ which are the constant loop when restricted to $[0, a] \subset [0, 4a]$. Consider the linear interpolation homotopy $H_s: \bullet_a(\Omega X)_{3a} \times I \to \Omega X$ that linearly interpolates between $H_0$ the identity map, and the map $H_1$ that reparametrizes the loop $\gamma$ in the following way.
\[ H_1(\gamma: [0, 4a] \to X) =  \begin{cases} t \mapsto \gamma(t+a) &\mbox{if } t < 3a \\
t \mapsto x_0 & \mbox{if } t \geq 3a \end{cases} \]

That is, if $Z$ is a cycle with curfew $3a$ then $H_1$ send $\bullet Z$ to $Z \bullet$. By "linearly interpolates" we mean that for $s \in [0,1]$, $H_s$ does the reparametrization that linearly interpolates between the identity reparametrization and the reparametrization in $H_1$. Note that $H_s$ preserves curfew for each $s \in [0,1]$. The homotopy $H$ induces a chain homotopy $H_*$ such that $H_*(\bullet Z_1 Z_2 \bullet)$ is a homology between $\bullet Z_1 Z_2 \bullet$ and $Z_1 Z_2 \bullet \bullet$. Note that $H_*$ does not increase suplength and for any chain $Z$,
\[ \text{Vol}_{n+1}(H_*(Z)) \lesssim \text{Suplength}(Z) \text{Vol}_n(Z). \]

All other summands are similar.
\end{proof}

The lemma follows, and so the theorem does too.

\end{proof}

\section{The proof of Theorem \ref{thma}}

In this section we will build the efficient family of $4$-cycles $Z_L$ on $\Omega Y$ for $Y =[(\mathbb{CP}^2)^{\#4} \times S^2]^\circ$. For $L=1$, the cycle $Z_1$ will be homologous (perhaps up to rescaling) to $\tau(\alpha)$, the Hurewicz image of a sweepout by loops of the homotopy class of the puncture in $Y$. This will be done by showing that both $[Z_1]$ and $\tau(\alpha)$ lie in the $1$-dimensional subspace of primitive elements in $H_4(\Omega Y)$. For $\tau(\alpha)$ this is immediate from the Milnor-Moore theorem; for $Z_1$ it will follow from the definition of $Z_1$ outlined below.

The cycles $Z_L$ will be built out of some building-block chains on $\Omega Y$, the homologies constructed above, and the bracket $[\cdot,\;\cdot]$ on $C_*(\Omega Y)$. The building-block chains on $\Omega Y$, which we denote by $A_i$, $B$, $C_i$ and $D$ ($1 \leq i \leq 4)$ are the generators of the Adams-Hilton construction applied to the natural cell decomposition of $Y$. We explicitly construct them below; see \cite{adamshilton} for the construction in full generality.

The natural cell structure on $Y$ has five $2$-cells: one for each of the $2$-cells in $(\mathbb{CP}^2)^{\#4}$ and one from the $S^2$. Let's call these $2$-cells $\tilde{A}_i$ for $1\leq i \leq 4$ and $\tilde{B}$ respectively. Each of these $2$-cells has trivial attaching map, so they can be thought of as embedded $2$-spheres in $Y$. Take a constant-curfew sweepout of the $2$-sphere by loops, i.e. $1$-cycle $\zeta_1$ in $\Omega S^2$ generating $H_1(\Omega S^2)$ integrally. Then pushing forward $\zeta_1$ under the loop of each of the five embeddings $S^2 \hookrightarrow Y$ gives five $1$-cycles in $\Omega Y$ that we will denote by $A_i$ for $1\leq i \leq 4$ and $B$.

The cell structure on $Y$ also has five $4$-cells: one for the product of the $2$-cell in $S^2$ and each $2$-cell in $(\mathbb{CP}^2)^{\#4}$, and one corresponding to the top cell of $(\mathbb{CP}^2)^{\#4}$ (crossed with the $0$-cell in $S^2$). Let's call these $4$-cells $\tilde{C}_i$ for $1 \leq i \leq 4$ and $\tilde{D}$ respectively. We want to write down corresponding $3$-chains in $\Omega Y$, but this is slightly trickier to do as these $4$-cells have nontrivial attaching maps. The attaching map of $\tilde{C}_i$ is the Whitehead product $[\tilde{A}_i, \tilde{B}]_{\text{Wh}}$.\footnote{Here, by slight abuse of notation, we conflate the embedded $2$-sphere $\tilde{A}$ with a map $S^2 \to Y$ and similarly for the $\tilde{B}_i$.} The attaching map $\phi_D$ of $\tilde{D}$ is homotopic to $$\displaystyle \sum_{i=1}^4 [\tilde{A}_i, \tilde{A}_i]_{\text{Wh}}$$

Now we construct the $3$-chain $D$ in $\Omega Y$ corresponding to $\tilde{D}$. Take a constant-curfew $3$-chain $\xi_4$ in $\Omega D^4$ which corresponds to a sweepout of loops of the $4$-disc relative to its boundary; that is $\partial \xi_4 = \zeta_3$ is a sweepout of $\Omega S^3$. Pushing this forward under the loop map of the inclusion of the $4$-cell $\tilde{D}$ gives us a $3$-chain $D'$ in $\Omega Y$. The boundary of $D'$ in $\Omega Y$ is a $2$-cycle that is the sweepout of the attaching map $\phi_D$ of $\tilde{D}$. Now by Samelson's theorem \cite{samelson}, $\partial D$ is homologous to $\sum_{i=1}^4 [A_i, A_i]$, where now here the bracket denotes the commutator product in $C_*(\Omega Y)$ rather than the Whitehead bracket. Fix a homology $D''$ between them. Then our $4$-chain $D$ is the sum of $D'$ and $D''$. 

We proceed similarly to construct each $C_i$. Sweeping out each $4$-cell $\tilde{C}_i$ gives a $3$-chain $C'_i$ with $\partial C'_i$ a $2$-cycle in $\Omega Y$ that is a sweepout of $[A_i, B]_{\text{Wh}}$. Take a homology $C''_i$ in $\Omega Y$ from $[\tilde{A}_i, \tilde{B}]_{\text{Wh}}$ to $[A_i, B]$. Then define $C_i \coloneqq C'_i + C''_i$.

The details of doing this process in full generality for any cell complex $X$ were worked out by Adams and Hilton \cite{adamshilton}, who use the cell decomposition of $X$ to give a small differential graded algebra that is quasi-isomorphic to $C_*(\Omega X)$. However the full power of the Adams-Hilton construction is not needed here since the cell decomposition of our $Y$ was fairly simple: in particular $Y$ is homotopy equivalent to the cofiber of a map between wedges of spheres. 

\subsection{The construction of $Z_L$}
First we will define a cycle $Z_1$, which will then be modified to produce the cycles $Z_L$. Given the $1$-cycles $A_i$ and $B$ and $3$-chains $C_i$ and $D$ constructed in the previous section, $Z_1$ is the cycle
$$Z_1 \coloneqq [B,\;D] + 2\sum_{i=1}^4[A_i, C_i].$$
The boundary of this is $$\partial Z_1 = [B, \sum_{i=1}^4 [A_i, A_i]] + 2\sum_{i=1}^4[A_i, [B, A_i]]$$ which vanishes due to the Jacobi identity applied to $A_i$, $A_i$ and $B$.

Now we define $Z_L$. It will look similar to the definition of $Z_1$ but with each of the building-block chains $A_i$, $B$, $C_i$, $D$ replaced by a modification of each. Let $A_{i,L}$ denote $\{L\}_* A_i$, and similarly $B_L$ denote $\{L\}_*B$. Let $C_{i,L}$ be the sum of the chain $L\{L\}_*C_i$ and the homology $P(A_i, B, L)$ (constructed in the previous section from $\partial L\{L\}_*C_i$ to $[\{L\}_*A_i, \{L\}_*B]$). So $C_{i,L}$ has suplength $\lesssim L$, volume $\lesssim L\text{log}L$, and boundary $[\{L\}_*A_i, \{L\}_*B] = [A_{i,L}, B_L]$.

Similarly, for each $i$ take the homology $P(A_i, A_i, L)$ constructed in the previous section from $L\{L\}[A_i, A_i]$ to $[\{L\}_*A_i, \{L\}_*A_i]$. Then define the $3$-chain
$$ D_L \coloneqq L\{L\}_*D + \sum_{i=1}^4 P(A_i, A_i, L)$$
which has suplength $\lesssim L$, volume $\lesssim L\text{log}L$, and boundary $\sum_{i=1}^4[\{L\}_*A_i, \{L\}_*A_i] = \sum_{i=1}^4[A_{i,L}, A_{i,L}]$.

Then the $4$-cycle $Z_L$ is
$$Z_1 \coloneqq [B_L,\;D_L] + 2\sum_{i=1}^4[A_{i,L}, C_{i,L}].$$

The boundary of this is $$\partial Z_L = [B_L, \sum_{i=1}^4 [A_{i,L}, A_{i,L}]] + 2\sum_{i=1}^4[A_{i,L}, [B_L, A_{i,L}]]$$ which vanishes due to the Jacobi identity applied to $A_{i,L}$, $A_{i,L}$ and $B_L$.

To prove Theorem \ref{thma} we will show the following bounds on the suplength of $Z_L$, the volume of $Z_L$, and the homological degree of $Z_L$ (with respect to some $\beta \in H^4(\Omega Y)$).

\begin{proposition} \label{boundsprop} There exist constants $C, c > 0$ and a $\beta \in H^4(\Omega Y)$ such that the $4$-cycles $Z_L$ constructed above satisfy:
\begin{enumerate}
    \item $\text{Suplength}(Z_L) \leq CL$
    \item $\text{$Vol$}(Z_L) \leq CL\text{log}L$
    \item $\langle \beta, Z_L \rangle > cL^3$
\end{enumerate} \end{proposition}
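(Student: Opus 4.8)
The plan is to establish the three estimates separately: (1) and (2) are bookkeeping propagations of estimates already in hand, while (3) carries the real content. For (1) and (2), the cycles $A_{i,L}=\{L\}_*A_i$ and $B_L=\{L\}_*B$ have suplength $O(L)$ and, since $\{L\}$ is $1$-Lipschitz on constant-curfew chains, volume $O(1)$; the chains $C_{i,L}$ and $D_L$ have suplength $O(L)$ and volume $O(L\log L)$, which is exactly what is built into their definitions, combining the obvious bounds on $L\{L\}_*C_i$ and $L\{L\}_*D$ with the bounds on the homologies $P(A_i,B,L)$ and $P(A_i,A_i,L)$ supplied by Theorem~\ref{thmb}. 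Since $[Z',Z'']$ has suplength at most the sum of the two suplengths and volume at most a constant times the product of the two volumes, each of the five summands $[B_L,D_L]$ and $[A_{i,L},C_{i,L}]$ of $Z_L$ has suplength $O(L)$ and volume $O(L\log L)$, and summing the (constantly many) summands gives $\mathrm{Suplength}(Z_L)\le CL$ and $\mathrm{Vol}(Z_L)\le CL\log L$.

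For (3) I first fix $\beta$. The construction of $Z_1$ --- via the Adams--Hilton description of $C_*(\Omega Y)$ together with Samelson's theorem identifying Whitehead products with commutators --- shows that $[Z_1]$ is a nonzero multiple $c_0\,\tau(\alpha)$ of the Hurewicz image of the puncture class, both lying in the one-dimensional space of primitives in $H_4(\Omega Y;\mathbb R)$ ($\tau(\alpha)$ being nonzero is exactly what is behind Berdnikov--Manin's example). Hence I may choose $\beta\in H^4(\Omega Y;\mathbb R)$ whose dual Hurewicz image pairs nontrivially with $\alpha$; then $\langle\beta,Z_1\rangle=c_0\langle\beta,\tau(\alpha)\rangle\neq 0$, and after replacing $\beta$ by $-\beta$ if necessary, $\langle\beta,Z_1\rangle>0$. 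I would take $\beta$ represented by an explicit closed iterated-integral cochain, so that pairings of $\beta$ against chains assembled from $A_i,B,C_i,D$, the Pontryagin product, and the maps $\{L\}$ are computable via Chen's calculus (deconcatenation coproduct for products, and the $L$-fold subdivision formula for $\{L\}_*$).

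The heart of the matter is then $\langle\beta,Z_L\rangle\ge cL^3$. Observe that $Z_L$, like $Z_1$, is a primitive $4$-cycle --- it is a sum of commutators of building blocks and its boundary vanishes by the same Jacobi identity --- so $[Z_L]=\lambda(L)\,[Z_1]$ for a scalar $\lambda(L)$ by one-dimensionality of the primitives in $H_4$, whence $\langle\beta,Z_L\rangle=\lambda(L)\langle\beta,Z_1\rangle$ and it suffices to prove $\lambda(L)\gtrsim L^3$. The three factors of $L$ arise because $A_{i,L}$ and $B_L$ are homologically $L$ times $A_i$ and $B$, while $C_{i,L}$ and $D_L$ each carry "homological weight" $L^2$ (their boundaries $[A_{i,L},B_L]$ and $\sum_i[A_{i,L},A_{i,L}]$ are $L^2$ times the boundaries of $C_i$, $D$), so each bracket $[A_{i,L},C_{i,L}]$, $[B_L,D_L]$ has weight $L^3$. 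To make this rigorous I would use the multiplicative extension $\theta_L\colon(\Lambda,d)\to C_*(\Omega Y)$ of $A_i\mapsto A_{i,L}$, $B\mapsto B_L$, $C_i\mapsto C_{i,L}$, $D\mapsto D_L$, where $\Lambda$ is the Adams--Hilton model: the defining boundary relations make $\theta_L$ a chain map, and it carries the model cycle $[B,D]+2\sum_i[A_i,C_i]$ to $Z_L$. Comparing $\theta_L$ with $\theta\circ\phi_L$, where $\theta$ is the Adams--Hilton quasi-isomorphism and $\phi_L\colon\Lambda\to\Lambda$ is the DGA endomorphism $A_i\mapsto LA_i$, $B\mapsto LB$, $C_i\mapsto L^2C_i$, $D\mapsto L^2D$ (a chain map that multiplies the model cycle by $L^3$), reduces $\lambda(L)=L^3$ to showing $\theta_L$ and $\theta\circ\phi_L$ induce the same map on $H_4(\Omega Y)$. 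These agree up to chain homotopy on the generators $A_i,B$, hence on the sub-Hopf-algebra of $H_*(\Omega Y)$ generated in degree $1$ and so on all decomposable classes; the only thing left is the single extra primitive generator of $H_4$, namely $[Z_1]$ itself.

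The step I expect to be the main obstacle is exactly this last point: showing $\theta_L$ and $\theta\circ\phi_L$ agree on $[Z_1]$, i.e.\ $\lambda(L)=L^3$ (or at least $\lambda(L)=L^3+o(L^3)$). The class $[Z_1]$ is a secondary, Massey-product-type class built from the relations $dC_i=[A_i,B]$ and $dD=\sum_i[A_i,A_i]$, and two chain maps agreeing only up to homotopy on the primary structure need agree on it only modulo the indeterminacy of that secondary product; I would either check this indeterminacy vanishes, or build the homology $Z_L\simeq L^3Z_1$ directly, assembling the chain-level homologies $\{L\}_*A_i\simeq LA_i$, $\{L\}_*B\simeq LB$ and the homologies $P(\cdot,\cdot,L)$ of Theorem~\ref{thmb} by bilinearity of the bracket so that all cross terms cancel by Jacobi, mirroring the identity $\partial Z_1=0$. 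As a safety net, one can instead evaluate $\langle\beta,Z_L\rangle$ directly from the closed iterated-integral representative of $\beta$: the deconcatenation coproduct expresses it as a sum of products of pairings of sub-iterated-integrals of $\beta$ against $A_{i,L},B_L,C_{i,L},D_L$, the $\{L\}$-scaling contributes $L,L,L^2,L^2$ in leading order, the products reproduce $L^3\langle\beta,Z_1\rangle$, and the remainder is of strictly lower order --- here it is essential to pair the closed $\beta$ with the cycle $Z_L$ as a whole, since then the value depends only on $[Z_L]$ and the geometrically large homologies $P$ cannot inflate it. Finally one confirms the leading coefficient is not killed by cancellation, but it is a positive multiple of $\langle\beta,Z_1\rangle$, arranged to be positive, so $\langle\beta,Z_L\rangle>cL^3$ for all large $L$.
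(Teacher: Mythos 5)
Your handling of (1) and (2) matches the paper exactly. For (3), your main line of attack and the paper's part ways, and you have in fact put your finger on the right obstacle but then step around it rather than through it.

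The paper does not attempt to show $[Z_L]=L^3[Z_1]$ and then pair. Instead it picks a very concrete $\beta$, the class of the iterated integral $\smallint a_1c_1$, where $a_1$ is a $2$-form dual to $\tilde A_1$ supported on the $2$-skeleton and $c_1=a_1b$ is a $4$-form that vanishes identically on $Y^{(2)}$. It then applies Chen's product formula to $Z_L=[B_L,D_L]+2\sum_i[A_{i,L},C_{i,L}]$, reducing to $\langle\smallint a_1,A_{i,L}\rangle$, $\langle\smallint a_1,B_L\rangle$, $\langle\smallint c_1,C_{i,L}\rangle$, $\langle\smallint c_1,D_L\rangle$. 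The crucial step is the last pair: $C_{i,L}=L\{L\}_*C_i+P(A_i,B,L)$, and the paper invokes the ``moreover'' clause of Theorem~\ref{thmb} to place $P$ inside $\Omega(Y^{(2)})$, where $\smallint c_1$ vanishes \emph{identically}. So $\langle\smallint c_1,C_{i,L}\rangle$ equals $\langle\smallint c_1,L\{L\}_*C_i\rangle=L^2$ exactly, not merely up to lower-order error, and the $L^3$ falls out cleanly.

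Two places where your proposal is incomplete. First, your main approach treats $\lambda(L)$ as a well-defined scalar, but $[Z_L]$ itself may depend on the choice of the homologies $P$: two choices differ by a $3$-cycle, whose bracket with $A_{i,L}$ is a $4$-cycle that need not bound. The paper sidesteps this not by showing independence of $P$ but by choosing the \emph{specific} $P$ from Theorem~\ref{thmb} with the $\Omega(Y^{(2)})$-support property, and choosing $\beta$ so that this support is invisible. Second, your safety-net reasoning that ``the value depends only on $[Z_L]$ and the geometrically large homologies $P$ cannot inflate it'' conflates the topological invariance of the total pairing $\langle\beta,Z_L\rangle$ with the sub-pairings that appear after applying Chen's product formula. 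Once you decompose, you are pairing $\smallint c_1$ against $C_{1,L}$, which is a \emph{chain}, not a cycle; that pairing is not a homology invariant and really could be blown up by a poorly chosen $P$. The skeleton-support property is what makes this a non-issue, and without invoking it the ``remainder is of strictly lower order'' claim is unjustified. Your instinct that the homologies must be prevented from contaminating the count was exactly right --- the missing ingredient is that this is arranged structurally (support in $\Omega(Y^{(2)})$ against a form vanishing there), not by an order-of-magnitude estimate.
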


The proof of this occupies the rest of the paper. We start with the suplength bound. By construction, the suplength of each chain $A_{i,L}$, $B_L$, $C_{i,L}$ and $D_L$ is $L$ times the suplength of the corresponding chain without the subscript $L$. Suplength is additive under the bracket, so the suplength bound clearly holds when $C$ is taken to be at least $$2\;\text{max} \{ \text{Suplength}(\Psi) \;|\; \Psi \in \{A_i, B, C_i, D\}\}.$$
Similarly we obtain the volume bound by inspecting the volume of each of $A_{i,L}$, $B_L$, $C_{i,L}$ and $D_L$. The cycles $A_{i,L} \coloneqq \{L\}_*A_i$ and $B_L \coloneqq \{L\}_B$ satisfy $\text{Vol}(A_{i,L}) \leq \text{Vol}(A_i)$ and $\text{Vol}(B_L) \leq \text{Vol}(B)$, as $\{L\}$ is $1$-Lipschitz on constant curfew subspaces of $\Omega X$ so nonincreasing in the volume of constant curfew chains. The chain $D_L$ is the sum of $L\{L\}_*D$ (volume $\leq L\;\text{Vol}(D)$ and a homology $Y$ constructed in the previous section (volume $\leq CL\text{log}L$). Hence $\text{Vol}(D_L) \leq CL\text{log}L$ for some $C$ and similarly with the $C_{i,L}$. The volume of the bracket is the product of the volumes (up to a constant factor) and so the volume bound on $Z_L$ follows.

The proof of the final part of the proposition occupies the next subsection.

\subsection{Homological degree of $Z_L$}

We will evaluate $Z_L$ against a cohomology class in $H^4(\Omega Y)$ using Chen's iterated integrals. These are differential forms defined on $\Omega Y$, built out of differential forms on the underlying space $Y$. For a rigorous setup of the smooth structure on $\Omega Y$ and discussion of the construction of iterated integrals, a good review is \cite{itintreview}. See also original papers \cite{itintloopspacehomology}, \cite{itpathints} a concise but thorough overview in Hain's thesis \cite{hainthesis}, and also \cite{itintinqt} for a further discussion of the interplay between iterated integrals on $\Omega X$ and a Riemannian metric on $X$.

The third part of Proposition \ref{boundsprop} is established via the following claim.

\begin{claim} There is a nonzero cohomology class $\beta \in H^4(\Omega Y)$ with the two following properties. First, the image of $\beta$ under the dual of the real Hurewicz map $H^4(\Omega Y) \rightarrow \text{Hom}(\pi_5(Y) \otimes \mathbb{R}, \mathbb{R})$ is nontrivial. Second, $\langle \beta, Z_L \rangle = L^3\langle \beta, Z_1 \rangle \neq 0$.
\end{claim}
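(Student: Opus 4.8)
The plan is to produce $\beta$ as an explicit iterated-integral cocycle on $\Omega Y$ and then to verify the two asserted properties by a combination of rational-homotopy bookkeeping and a scaling computation. First I would record the minimal model (or at least the relevant truncation of the Adams--Hilton / cobar model) of $Y = [(\mathbb{CP}^2)^{\#4}\times S^2]^\circ$: the generators dual to the $2$-cells $\tilde A_i,\tilde B$ in degree $2$, the relations coming from the attaching maps of the $4$-cells (the Whitehead products $[\tilde A_i,\tilde B]_{\mathrm{Wh}}$ and $\sum_i[\tilde A_i,\tilde A_i]_{\mathrm{Wh}}$), and the class $\alpha\in\pi_5(Y)\otimes\mathbb{Q}$ of the puncture, which lives in the kernel of Hurewicz and is detected by a secondary/Massey-type operation built from those relations. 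Dualizing, I would write down a degree-$4$ form on $\Omega Y$ of the shape
\[
\beta \;=\; \sum_i \int a_i\, c_i \;+\; \int b\, d \;+\; (\text{correction terms}),
\]
where $a_i,b$ are closed $2$-forms on $Y$ Poincaré-dual to $\tilde A_i,\tilde B$ and $c_i,d$ are $4$-forms (or $3$-form primitives) witnessing the attaching relations, the correction terms being chosen exactly so that $\beta$ is closed as an iterated integral — this closedness is forced by the same Jacobi identity that makes $\partial Z_1 = 0$ and $\partial Z_L = 0$. The parallel between the formula for $Z_1$ ($[B,D]+2\sum[A_i,C_i]$) and the formula for $\beta$ is not a coincidence: $\beta$ is essentially the cochain dual to the cycle $Z_1$ under the Chen/Adams--Hilton pairing, so $\langle\beta,Z_1\rangle\neq 0$ should come out of the construction almost tautologically once the bookkeeping is set up correctly.

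Next I would prove the first property, that $\tau^\vee(\beta)\in\mathrm{Hom}(\pi_5(Y)\otimes\mathbb{R},\mathbb{R})$ is nonzero, i.e. $\langle\beta,\tau(\alpha)\rangle\neq 0$. Here I would use the Milnor--Moore / Samelson picture from Section~2: $\tau(\alpha)\in H_4(\Omega Y)$ is primitive, and the space of primitives in $H_4(\Omega Y;\mathbb{R})$ is one-dimensional (this follows from the rational homotopy of $Y$ — $\pi_5(Y)\otimes\mathbb{Q}$ has rank one in the relevant graded piece). So it suffices to exhibit any primitive class on which $\beta$ does not vanish; since $\beta$ is built to be primitive-sensitive (it involves genuine iterated integrals of length $\geq 2$, not products of shorter ones, so it annihilates decomposables and detects exactly the primitive part), and since $[Z_1]$ is itself primitive and $\langle\beta,Z_1\rangle\neq 0$, we conclude $[Z_1]$ spans the primitive line and $\tau(\alpha)$ is a nonzero multiple of it, whence $\langle\beta,\tau(\alpha)\rangle\neq 0$. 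The one genuine input needed is that $\dim(\text{primitives in }H_4(\Omega Y;\mathbb{R}))=1$ and that $[Z_1]\ne 0$ there, which I would get from the rational homotopy groups of $Y$ (equivalently from the Adams--Hilton model: $H_4(\Omega Y)$ in low degree is generated by the relevant brackets, and after imposing the Jacobi relation only a one-dimensional primitive survives).

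Finally, the scaling identity $\langle\beta,Z_L\rangle = L^3\langle\beta,Z_1\rangle$. The point is that $Z_L$ is obtained from $Z_1$ by replacing each building block $\Psi\in\{A_i,B,C_i,D\}$ by $\{L\}_*\Psi$ plus (for $C_i,D$) a correction homology $P(\cdot,\cdot,L)$; iterated integrals are reparametrization-invariant, so pulling back $\beta$ along $\{L\}$ multiplies the length-$k$ iterated-integral terms in a controlled way. Concretely, $\{L\}$ is the $L$-fold concatenation, and an iterated integral $\int\omega_1\cdots\omega_k$ pulled back along $\mu_L\circ\Delta_L$ evaluates on $\{L\}_*\Psi$ as a sum over shuffles / cut-points distributing the $\omega_j$ among the $L$ copies; for the \emph{top-length} part of $\beta$ — which is what pairs nontrivially with the primitive cycle $Z_1$ — this multiplies the answer by $\binom{L}{k}$-type or $L^k$-type factors, and here $k$ is such that the total comes to $L^3$ (the homological degree is $4$, the loop-space degree sits in the relevant filtration so the leading scaling exponent is $3$, matching the $L^3$ in Proposition~\ref{boundsprop}(3) and the $\Omega(L^6/\log L)$ in Theorem~\ref{thma} after accounting for the $L\log L$ volume normalization). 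The correction homologies $P(A_i,B,L)$ and $P(A_i,A_i,L)$ from Theorem~\ref{thmb} contribute nothing new to the pairing because $\beta$ is closed, so $\langle\beta,P\rangle$ depends only on $\partial P$, and the boundary terms were arranged to telescope; thus only the $\{L\}_*$-rescaled part of each block contributes, giving the clean factor $L^3$. I expect the \textbf{main obstacle} to be the last step: bookkeeping the shuffle/cut-point combinatorics of iterated integrals under the $L$-fold concatenation map precisely enough to see that the lower-length terms of $\beta$ and the boundary-contributions of the $P$'s really do cancel or scale subleadingly, leaving exactly $L^3\langle\beta,Z_1\rangle$ — this is where an off-by-one in the filtration degree or a stray binomial coefficient would wreck the bound, and it is the place where I would be most careful to pin down the exact length-filtration of the iterated integral $\beta$ against the known rational homotopy of $Y$.
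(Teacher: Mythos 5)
Your high-level picture (iterated-integral cocycle, primitivity of $Z_1$, Chen--Adams--Hilton bookkeeping) is the right one, and your argument for the first property agrees with the paper: $Z_1$ is primitive because it is a Lie word in the Adams--Hilton generators, the primitive part of $H_4(\Omega Y;\mathbb{R})$ is one-dimensional by Milnor--Moore, and $\langle\beta,Z_1\rangle\neq 0$ forces the dual-Hurewicz image to be nonzero. Two things diverge from the paper. First, the paper's $\beta$ is the single length-two iterated integral $\smallint a_1 c_1$ (with $c_1=a_1 b$), not a symmetrized sum over all the cells, and this single term is all that is needed; your added correction terms are unnecessary and would only complicate the verification.

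The genuine gap is in the scaling step. You justify discarding the correction homologies $P(A_i,B,L)$, $P(A_i,A_i,L)$ with the claim that, since $\beta$ is closed, $\langle\beta,P\rangle$ ``depends only on $\partial P$.'' That is backwards: for a closed form and a chain $P$ that is not a cycle, Stokes gives $\langle\beta,\partial P\rangle=0$, but $\langle\beta,P\rangle$ itself is not controlled by $\partial P$ --- two chains with equal boundary can pair differently with $\beta$. The correct reason the $P$'s drop out, and the crux of the computation, is the ``moreover'' clause of Theorem~\ref{thmb}: because $A_i$ and $B$ are supported in $\Omega Y^{(2)}$, the homology $P$ may be taken supported in $\Omega Y^{(2)}$ as well, and the form $c_1=a_1 b$ vanishes identically on $Y^{(2)}$, so $\smallint c_1$ vanishes on $\Omega Y^{(2)}$ and hence on $P$. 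With that in hand the $L^3$ factor falls out cleanly, with no shuffle combinatorics to worry about: by the Chen product formula the only surviving terms are $\langle\smallint a_1,\,\Psi_L\rangle\langle\smallint c_1,\,\Phi_L\rangle$; closedness of $\smallint a_1$ plus $[\{L\}_*A_1]=L[A_1]$ gives $\langle\smallint a_1, A_{1,L}\rangle=L$; and the support argument reduces $\langle\smallint c_1, C_{1,L}\rangle$ to $L\langle\smallint c_1,\{L\}_*C_1\rangle$, which the suspension lemma (Lemma~\ref{suspensionlemma}) evaluates to $L\cdot L=L^2$ as the relative degree over the cell $\tilde C_1$. Your worry that ``an off-by-one in the filtration degree or a stray binomial coefficient would wreck the bound'' is a symptom of taking the shuffle-combinatorics route rather than the support-and-suspension route; the latter avoids the issue entirely.
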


Here's the $\beta$ that we will use. The $2$-cells $\tilde{A}_i$ and $\tilde{B}$ in $Y$ give a basis for $H_2(Y)$. Take differential forms $a_i, b$ on $Y$ that represent the dual basis of $H^2(Y)$, such that when restricted to the $2$-skeleton $Y^{(2)}$, each $a_i$ and $b$ is only supported on its respective $2$-cell and their supports do not contain the basepoint. Similarly, the  $4$-cells of $Y$ are $\tilde{C}_i$ and $\tilde{D}$. These give a basis of $H_4(Y)$. A dual basis of $H^4(Y)$ is represented by $c_i \coloneqq a_ib$ and $d \coloneqq a_1^2$ (the $a_i^2$ are all cohomologous). Note that $a_ib$ vanishes identically on the $2$-skeleton $Y^{(2)}$ of $Y$. Then consider the iterated integral $\smallint a_1 c_1$, a closed differential $4$-form on $\Omega Y$. Then $\beta$ is the cohomology class of this $\smallint a_1 c_1$.

Intuitively, this cohomology class can be thought of as follows. Fix a relative $4$-submanifold $PD(a_1)$ in $(Y, \partial Y)$ Poincar\'e dual to $a_1$ $\tilde{A}_1$, and relative $2$-submanifold $PD(c_1)$ in $(Y, \partial Y)$ Poincar\'e dual to $c_1$. Given a $4$-cycle $Z$ in $\Omega Y$. the pairing $\langle \smallint c_1 a_1, Z \rangle$ counts, with appropriate multiplicity, the number of loops in the support of $Z$ that first pass through $PD(c_1)$ (a codimension $3$ condition) and then pass through $PD(a_1)$ (a codimension $1$ condition). I would like to thank Dev Sinha for explaining this way of thinking to me.

First we show that under the dual of the real Hurewicz, $\beta$ is nontrivial in $\text{Hom}(\pi_5(Y)\otimes \mathbb{R}, \mathbb{R})$. We do this by showing that the cycle $Z_1$ is primitive (so its homology class in the image of the real Hurewicz map $\pi_4(\Omega Y) \rightarrow H_4(\Omega Y)$) and showing that $\langle \beta, Z_1 \rangle \neq 0$.

The cycle $Z_1$ is primitive for algebro-topological reasons. The chains $A_i, B, C_i, D$ generate the Adams-Hilton chain algebra $AH(Y)$ which is a model for $C_*(\Omega Y)$. Working rationally, the primitively generated Hopf algebra structure on $AH(Y)$ gives an isomorphism between $H_*(AH(Y))$ and $H_*(\Omega Y)$ as Hopf algebras \cite{ratlhtpypthy} \cite{milnormoore}. The upshot of this is that any cycle constructed out of $A_i$, $B$, $C_i$, $D$ and the Lie bracket on cycles is primitive.

Next we will evaluate $\beta$ our cycles $Z_1$ and $Z_L$. To do this we will employ the following two lemmas about evaluation of iterated integrals.

\begin{lemma} (Chen, \cite{itintloopspacehomology}) Let $M$ be a simply connected smooth manifold with finite Betti numbers. Let $Z_1$ and $Z_2$ be chains on $\Omega M$ and $Z_1 Z_2$ their product. If $\omega_1, \omega_2, \dots,  \omega_r$ are smooth forms on $M$ then
$$ \langle \smallint \omega_1 \omega_2 \dots \omega_r,\; Z_1 Z_2 \rangle = \sum_{i=0}^r \langle \smallint \omega_1 \dots \omega_i,\; Z_1 \rangle \langle \smallint \omega_{i+1} \dots \omega_r,\; Z_2 \rangle.$$
\end{lemma}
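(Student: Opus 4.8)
The plan is to unwind the definition of the iterated integral as a fibre integral over a time‑ordered simplex and to exploit the fact that, for Moore loops, concatenation splits the parametrizing interval at a well‑defined curfew value. By bilinearity of both sides it suffices to treat the case where $Z_1$ and $Z_2$ are single cubes $\sigma\colon I^m \to \Omega M$ and $\tau\colon I^n \to \Omega M$, so that $Z_1 Z_2$ is the cube $(u,v)\mapsto \sigma(u)\cdot\tau(v)$ (no shuffles are needed at the chain level because the Pontryagin product of two cubes is again a cube). Writing $a(u)$ and $b(v)$ for the curfews of $\sigma(u)$ and $\tau(v)$, the concatenated loop has curfew $a(u)+b(v)$, and its evaluation at time $t$ agrees with $\sigma(u)$ on $[0,a(u)]$ and with $\tau(v)$ (time‑shifted) on $[a(u),a(u)+b(v)]$.

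First I would recall that $\langle \smallint \omega_1\cdots\omega_r,\sigma\rangle$ is computed by pulling the $\omega_j$ back under the evaluation map along the fibre simplex $\Delta^r(a(u)) = \{\,0\le t_1\le\cdots\le t_r\le a(u)\,\}$, wedging the pullbacks, integrating over the fibre, and then integrating the resulting $m$‑form over $I^m$. The key geometric observation is the decomposition, valid up to a set of measure zero,
$$\Delta^r\big(a(u)+b(v)\big) \;=\; \bigsqcup_{i=0}^{r}\Delta_i,\qquad \Delta_i=\{\,0\le t_1\le\cdots\le t_i\le a(u)\le t_{i+1}\le\cdots\le t_r\le a(u)+b(v)\,\},$$
together with the identification $\Delta_i\cong\Delta^i(a(u))\times\Delta^{r-i}(b(v))$ via the substitution $s_k=t_{i+k}-a(u)$. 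On $\Delta_i$ the evaluation map sees $\sigma(u)$ only through the first $i$ time variables and $\tau(v)$ only through the last $r-i$, so the pulled‑back integrand factors as the $\omega_1\cdots\omega_i$‑integrand for $\sigma$ wedged with the $\omega_{i+1}\cdots\omega_r$‑integrand for $\tau$. Applying Fubini twice — once to integrate out the two independent factor simplices, once to split the integral over $I^m\times I^n$ — turns the contribution of $\Delta_i$ into $\langle\smallint\omega_1\cdots\omega_i,\sigma\rangle\,\langle\smallint\omega_{i+1}\cdots\omega_r,\tau\rangle$ (up to sign), and summing over $i$ yields the claim.

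The main obstacle is the sign bookkeeping. Reordering the one‑forms $dt_k$ against the pulled‑back forms $\mathrm{ev}^*\omega_j$, comparing the orientation of $\Delta_i$ with the product orientation on $\Delta^i(a(u))\times\Delta^{r-i}(b(v))$, and comparing the orientation of $I^m\times I^n$ with the product orientation all introduce Koszul signs depending on the degrees $|\omega_j|$ and on $m,n$. I would adopt Chen's convention in which each $\omega_j$ contributes its degree shifted down by one inside an iterated integral, so that $\smallint\omega_1\cdots\omega_r$ is a form of degree $\sum_j(|\omega_j|-1)$ on $\Omega M$; with this convention the accumulated signs cancel and the identity holds on the nose, exactly as carried out in Chen's work \cite{itintloopspacehomology} and in Hain's thesis \cite{hainthesis}. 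The degenerate terms $i=0$ and $i=r$, in which one of the iterated integrals is the empty product (the constant function $1$), and the case of a form $\omega_j$ of degree zero, need no separate treatment: they are automatically consistent once one observes that $\langle 1, Z\rangle$ vanishes unless $\dim Z = 0$.
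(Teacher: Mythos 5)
The paper states this lemma without proof, citing Chen \cite{itintloopspacehomology}; there is no argument in the text to compare against. Your sketch is a correct reconstruction of the standard argument: reduce to single cubes (which works cleanly here precisely because the paper uses cubical chains, so the Pontryagin product of two cubes $\sigma\colon I^m\to\Omega M$ and $\tau\colon I^n\to\Omega M$ is the single cube $(u,v)\mapsto\sigma(u)\cdot\tau(v)$), decompose the time-ordered simplex $\Delta^r(a+b)$ at the concatenation time $a$ into the pieces $\Delta_i\cong\Delta^i(a)\times\Delta^{r-i}(b)$, observe that the evaluation map factors through $\sigma$ on the first $i$ coordinates and $\tau$ on the last $r-i$, and apply Fubini. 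This is exactly the proof in Chen and Hain. Your adaptation to Moore loops with $u$-dependent curfew $a(u)$ is a mild generalization but causes no trouble, since the simplex bundle over $I^m\times I^n$ still fiberwise decomposes and Fubini applies to the total integral. The one point that genuinely requires care is the Koszul sign bookkeeping you flag; the identity holds with no extraneous signs only under Chen's degree convention (each $\omega_j$ contributing $|\omega_j|-1$), and it is correct that you defer the verification to the cited sources rather than claim it is automatic. Your remark about the degenerate $i=0$ and $i=r$ terms — that $\langle 1,Z\rangle$ vanishes unless $\dim Z=0$ — is also right and is the standard way these boundary cases are absorbed.
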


In both lemmas, the convention is that the paring vanishes if the dimension of the differential form is not equal to the dimension of the chain that it is being evaluated.

\begin{lemma} \label{suspensionlemma} (Chen, \cite{itpathints}) Let $M$ be a simply connected smooth manifold with finite Betti numbers. Let $Z$ be a $n$-chain on $\Omega M$ with suspension $\hat{Z} \in C_{n+1}(M)$. Let $\omega$ be a differential form on $M$. Then 
$\langle \smallint w,\; Z \rangle = \langle \omega,\; \hat{Z} \rangle$.
\end{lemma}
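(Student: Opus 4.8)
The plan is to prove this by unwinding the definitions of the two sides and observing that the identity is nothing more than the projection formula. The one-fold iterated integral $\smallint\omega$ is, by definition, a fiberwise integral of $\omega$ over the evaluation map $e\colon \Omega M \times [0,1] \to M$, while the suspension $\hat Z$ of a chain is the pushforward of $Z\times[0,1]$ along that same map $e$. So once both sides are written in these terms, the statement becomes the combination of Fubini's theorem (integration along the $[0,1]$-fiber is adjoint to crossing with $[0,1]$) and the elementary change-of-variables formula $\int_{g_*c}\omega=\int_c g^*\omega$ for integrating forms over chains.

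Concretely, I would first recall from the cited references the smooth (Chen-space / diffeological) structure on $\Omega M$, the evaluation $e(\gamma,t)=\gamma(ta)$ for a loop $\gamma$ of curfew $a$, and the projection $p\colon \Omega M\times[0,1]\to\Omega M$; then for a $(k+1)$-form $\omega$ on $M$ the definition reads $\smallint\omega = p_*(e^*\omega)$, a $k$-form on $\Omega M$, whose pairing with a chain vanishes unless the chain has dimension $k=\deg\omega-1$, which forces $\deg\omega=n+1$. Next I would recall that for $Z=\sum_j c_j\sigma_j$ the suspension is $\hat Z = e_*(Z\times[0,1]) = \sum_j c_j\,(e\circ(\sigma_j\times \mathrm{id}))_*([\Delta^n]\times[0,1])\in C_{n+1}(M)$. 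The computation is then
\[
\langle \smallint\omega,\, Z\rangle
= \int_Z p_*(e^*\omega)
= \int_{Z\times[0,1]} e^*\omega
= \int_{e_*(Z\times[0,1])}\omega
= \int_{\hat Z}\omega
= \langle \omega,\, \hat Z\rangle ,
\]
where the second equality is the fiber-integration adjunction and the third is the pushforward-of-chains formula.

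The mathematics here is entirely formal, so the main obstacle is bookkeeping rather than a genuine difficulty: one must check that the sign convention in the definition of integration along a fiber, the orientation convention on $Z\times[0,1]$, and the convention defining $\hat Z$ are chosen consistently, namely so that the displayed chain of equalities carries no stray sign — this is Chen's choice, and I would just verify it agrees with the setup of the sources cited. Two minor technical points, both already handled by those references, are that $e$ is smooth in the appropriate sense so that fiber integration is defined, and that the curfew reparametrization $t\mapsto ta$ is harmless since it only reparametrizes the $[0,1]$-factor and integration of forms is reparametrization invariant. Finally I would note that neither simple connectivity nor finiteness of Betti numbers is actually used in this lemma — those hypotheses are inherited from the ambient theory of iterated integrals — and that only the $r=1$ case of the iterated-integral construction enters.
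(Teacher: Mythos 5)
The paper does not actually prove this lemma: it is stated with an attribution to Chen and a citation, and the body of the paper simply applies it. There is therefore no in-paper argument to compare against, but your proposal is a correct reconstruction of the standard proof. Writing $\smallint\omega = p_*(e^*\omega)$ for the length-one iterated integral and $\hat Z = e_*(Z\times[0,1])$ for the suspension, and then chaining the fiber-integration adjunction together with the $\int_{g_*c}\omega=\int_c g^*\omega$ identity, is exactly the right formal mechanism, and you are right that the statement then reduces to pure bookkeeping of orientation conventions. Two small cosmetic remarks: the paper works with cubical rather than simplicial chains, so $[\Delta^n]$ should be $[0,1]^n$ (this changes nothing of substance); and, as you observe, the simple connectivity and finite Betti number hypotheses play no role here --- they are inherited from the ambient theory in which Chen proves that iterated integrals compute the cohomology of $\Omega M$, not from this particular evaluation identity.
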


Using these lemmas and the description of $Z_1 = [B, D] + 2\sum_{i=1}^4[A_i, C_i]$ we can compute:
\begin{align*}
    \langle \smallint a_1 c_1,\; Z_1 \rangle
    &=  \langle \smallint a_1 c_1,\; BD + DB + 2\sum_{i=1}^4(A_iC_i + C_iA_i)\rangle \\
    &=  \langle \smallint a_1,\; B \rangle \langle \smallint c_1,\; D \rangle + 2\sum_{i=1}^4 \langle \smallint a_1,\; A_i \rangle \langle \smallint c_1,\; C_i \rangle \\
    &= 0 \cdot 0 + 2 \sum_{i=1}^4 \delta_{i1} \cdot \delta_{i1} \\
    &= 2 \neq 0.
\end{align*}

Next, we evaluate $\smallint a_1 c_1$ on $Z_L$. Recall $Z_L = [B_L, D_L] + 2 \sum_{i=1}^4[A_{i,L}, C_{i,L}]$. 
\begin{lemma}
Let $X$ be an arbitrary topological space and let $Z$ be a spherical $n$-cycle in $\Omega X$. Then on the level of homology, $[\{L\}_*Z] = L[Z]$.
\end{lemma}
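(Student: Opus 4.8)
The plan is to reduce the statement to a computation in the homotopy group $\pi_n(\Omega X)$ (here $n \geq 1$, the only case in which the statement can hold) and then transport it to homology via the Hurewicz homomorphism. Write $Z = f_*[S^n]$ for a map $f: S^n \to \Omega X$, with $[S^n]$ the chosen fundamental cycle; on the level of homology this says $[Z] = h([f])$, where $h : \pi_n(\Omega X) \to H_n(\Omega X)$ is the Hurewicz homomorphism (defined and natural for every $n \geq 1$, regardless of whether $\Omega X$ is simply connected) and $[f] \in \pi_n(\Omega X)$ is the homotopy class of $f$. Since $\{L\}_* Z = (\{L\} \circ f)_*[S^n]$ and $\{L\} \circ f$ represents $\{L\}_*[f] \in \pi_n(\Omega X)$, functoriality together with the fact that $h$ is a homomorphism gives $[\{L\}_* Z] = h(\{L\}_*[f])$. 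Thus it suffices to prove that $\{L\}_* : \pi_n(\Omega X) \to \pi_n(\Omega X)$ is multiplication by $L$, after which $[\{L\}_* Z] = h(L[f]) = L\, h([f]) = L[Z]$.

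For the homotopy-group computation, recall $\{L\} = \mu_L \circ \Delta_L$. The functor $\pi_n$ takes products to products; $(\Delta_L)_*$ is then the diagonal $a \mapsto (a, \dots, a)$, and $(\mu_L)_* : \pi_n(\Omega X)^L \to \pi_n(\Omega X)$ is the $L$-fold addition. For the latter: composing $\mu_L$ with the inclusion of any single factor, filling the remaining slots with the constant loop of curfew $0$, is literally the identity map of $\Omega X$ in the Moore loop model (curfew-$0$ constants are a strict unit); hence $(\mu_L)_*$ is a homomorphism on $\pi_n(\Omega X)^L$ restricting to the identity on each factor, so it is the sum. Composing, $\{L\}_* a = a + \cdots + a = La$. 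Here I use that the $H$-space operation on $\Omega X$ induces on $\pi_n(\Omega X)$, for $n \geq 1$, its ordinary (abelian) group structure, by the Eckmann--Hilton argument; this needs no hypothesis on $X$, which matches the generality in which the lemma is later applied.

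Assembling the two steps yields $[\{L\}_* Z] = L[Z]$, as claimed. (One could equally run the argument one level down in the Pontryagin Hopf algebra $H_*(\Omega X)$: a spherical class $x = [Z]$ is primitive, $(\Delta_L)_*$ sends $x$ to $\sum_{j=1}^L 1 \otimes \cdots \otimes x \otimes \cdots \otimes 1$, and $(\mu_L)_*$ carries each of these $L$ terms back to $x$ in the Pontryagin product, giving $\{L\}_* x = Lx$.) There is no real obstacle here; the only point deserving explicit mention is the identification of $(\mu_L)_*$ on $\pi_n$ with iterated addition, i.e. that the loop-concatenation $H$-space structure recovers the standard group structure on $\pi_n(\Omega X)$ — classical, but worth stating since the surrounding section invokes this lemma with no connectivity assumptions on $X$.
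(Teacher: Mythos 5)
Your proof is correct, and it takes a genuinely different route from the paper's. The paper proves this lemma by an explicit chain-level construction, essentially reusing the same device that drives the proof of Theorem~B: writing $\{L\}_*Z = (\mu_L)_*(f\times\cdots\times f)_*\,\mathrm{Diag}(\omega_n)$, choosing a fixed $(n+1)$-chain $Y_{n,L}$ in $(S^n)^{\times L}$ with $\partial Y_{n,L} = \mathrm{Diag}(\omega_n) - \mathrm{Bouquet}(\omega_n)$, and pushing it forward to obtain a concrete homology from $\{L\}_*Z$ to (a reparametrization of) $LZ$. You instead work homotopy-theoretically: lift $[Z]$ to $\pi_n(\Omega X)$ via the Hurewicz homomorphism, observe that $(\mu_L)_*$ restricts to the identity on each factor (using that the curfew-$0$ constant loop is a strict unit in the Moore model) and is therefore the sum map on $\pi_n(\Omega X)^L$, conclude $\{L\}_* = L$ on $\pi_n$, and push back down by $h$. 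Both are fine; the paper's version is more in keeping with its overall constructive style (indeed that explicit homology, equipped with quantitative volume bounds, is exactly what appears in the proof of Theorem~B), while yours is shorter and makes the role of the $H$-space structure and of Eckmann--Hilton transparent. One small point worth flagging in the Hurewicz route: the paper's definition of a spherical cycle does not require $f$ to be basepoint-preserving, so to speak literally of $[f]\in\pi_n(\Omega X)$ one should first note that $f$ may be based up to homotopy within its path component without changing $f_*[S^n]$; this is standard and harmless in the setting where the lemma is applied (the $Z$'s are all based at the constant loop), but it is the one hypothesis your argument quietly uses that the chain-level argument does not need.
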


\begin{proof} Let $f: S^n \to \Omega X$ be a map and $\omega_n$ an $n$-cycle representing the fundamental class such that $f_*\omega_n = Z$. Let $Y_{n,L}$ be an $(n+1)$-chain in $(S^n)^{\times L}$ with boundary $\text{Diag}(\omega_n) - \text{Bouquet}(\omega_n)$. Then $(f \times f \times \dots \times f)_*Y_{n,L}$ is a homology between $\{L\}_*Z$ and a cycle that, after a reparametrization of loops, is homologous to $LZ$.
\end{proof}

Hence on the level of homology $[B_L] = L[B]$ and similarly $[A_{i,L}] = L[A_i]$.  Since the $1$-form $\smallint a_i$ is closed on $ \Omega X$, this gives us that $\langle \smallint a_i,\; B_L \rangle = 0$ and $\langle \smallint a_i,\; A_{i,L} \rangle = L\cdot\delta_{1i}$. This gives
\begin{align} \label{evaluateZ_L}
\langle \smallint a_1 c_1, \; Z_L \rangle &=  0 \cdot \langle \smallint c_1, D \rangle + 2 L \cdot \langle \smallint c_1,\; C_{1,L} \rangle
\end{align} 

It remains to compute $\langle \smallint c_1,\; C_{1,L} \rangle$. The $3$-chain $C_{1,L}$ is the sum of $L\{L\}_*C_1$ and a $3$-chain $Y$ constructed in the proof of Theorem \ref{thmb}. In particular, $P$ is supported in the subspace $\Omega (Y^{(2)}) \subset \Omega Y$ because the $A_i$ and $B$ are supported in $\Omega (Y^{(2)})$ too. The differential $4$-form $c_1$ on $Y$ vanishes on $Y^{(2)}$, and hence $\smallint c_1$ vanishes on $\Omega (Y^{(2)})$. Thus
$$ \langle \smallint c_1,\; C_{1,L} \rangle = \langle \smallint c_1,\; L\{L\}_*C_1 \rangle = L\langle \smallint c_1,\; \{L\}_*C_1 \rangle.$$

We can evaluate $\langle \smallint c_1,\; \{L\}_*C_1 \rangle$ using Lemma \ref{suspensionlemma}. The suspension $\widehat{C_1}$ is the $4$-cell $\tilde{C}_1$ and the suspension $\widehat{\{L\}_*C_1}$ is $L$ copies of $\tilde{C}_1$ stuck together. For a $4$-chain $Z$ in $Y$ with boundary lying in $Y^{(2)}$, $\langle c_1, Z \rangle$ measures the degree (relative to $Y^{(2)}$ over the $4$-cell $\tilde{C}_i$. Hence $\langle c_1,\; \widehat{\{L\}_* C_1} \rangle = L$. Plugging this back into (\ref{evaluateZ_L}) gives $\langle \smallint a_1 c_1,\; Z_L \rangle = 2L^3 = L^3 \langle \smallint a_1 c_1,\; Z_1 \rangle$ as required. 

\printbibliography

\medskip

\textsc{Department of Mathematics, Massachusetts Institute of Technology, Cambridge, MA, United States} \\
E-mail address: \texttt{relliott@mit.edu}

\end{document}